\makeatletter\@addtoreset {equation}{section}\makeatother
\newtheorem{theorem}{Theorem}[section]
\newtheorem{lem}[theorem]{Lemma}
\newtheorem{prop}[theorem]{Proposition}
\theoremstyle{definition}
\newtheorem{example}[theorem]{Example}
\newtheorem{question}[theorem]{Question}
\def\Chi{\mathcal X}
\def\E{\mathcal E}
\def\eps{\varepsilon}
\def\eps{\varepsilon}
\def\RR{{\mathbb R}}
\def\V{\mathcal V}
\def\W{\mathcal W}
\begin{document}

\setcounter{page}{1}
%\tableofcontents

%\newpage 
\title{A stability result for Riesz potentials in higher dimensions}

\author{Almut Burchard and Gregory R. Chambers}

\date{July 22, 2020}
%\date{August 6, 2018}

\begin{abstract} We prove a stability estimate,
with the optimal quadratic error term, for 
the Coulomb energy of a set in $\RR^n$ with $n \geq 3$. 
This estimate extends to a range of Riesz potentials.
%\bigskip\noindent{\bf Keywords} Riesz--Sobolev inequality; stability.
%\bigskip\noindent{\bf AMS Subject Classification:}

\end{abstract}
\maketitle

%%%%%%%%%%%%%%%%%%%%%%%%%%%%%%%%%%%%%%%%%%%%%%%%%%%%%%%%%%%%%%%%%%%%
\section{Introduction}
\label{sec:intro}

Let $\phi$ be a strictly
radially decreasing, nonnegative measurable function
on $\RR^n$ that vanishes at infinity,
and consider the convolution functional
$$
\E(f) = \int_{\RR^n}\int_{\RR^n} f(x) f(y) \phi(x-y)
\, dxdy\,,
$$
defined for measurable functions $f$ on $\RR^n$
that satisfy a suitable integrability condition.
It is well-known that the value of $\E$
can only increase under symmetric decreasing rearrangement
of $f$~\cite{R-1930,S-1938}: If $f^*$ is the 
function equimeasurable with $|f|$
that decreases radially about the origin, then
the Riesz--Sobolev inequality implies that
$$
\E(f^*) \ge \E(f)\,.
$$
Equality occurs (for a finite, non-zero value of $\E$
and nonnegative $f$) if only if $f$ itself is symmetric decreasing
about some point $x_0\in\RR^n$, that is, 
$f(x)=f^*(x-x_0)$~\cite{L-1976}.
A natural question is whether near-equality
implies that $f$ must be close to a translate of $f^*$? If so, how close
must it be?

In this paper, we investigate near-equality cases
in the Riesz--Sobolev inequality when $\phi=\phi_\lambda$ is a
{\em Riesz potential}
\begin{align}
\label{eq:def-Riesz}
\phi_\lambda (x)= \frac{1}{c_\lambda}|x|^{-(n-\lambda)}\,,
\end{align}
and $f=\Chi_A$ is the characteristic function
of a set $A\subset\RR^n$ of finite positive volume $|A|$.
Here, $0<\lambda<n$, and $c_\lambda$ is a particular
normalizing constant, see Eq.~\eqref{eq:def-c} below.
The symmetric decreasing rearrangement of $\Chi_{A}$ 
is the characteristic function of $A^*$,
the centered ball of the same volume as $A$. 
With a slight abuse of notation, we write
\begin{equation}
\label{eq:def-E}
\E_\lambda(A)= \int_A \int_A \phi_\lambda(x-y)\, dxdy\,.
\end{equation}
Our main result provides a lower bound on the {\em deficit}
\begin{equation}
\label{eq:def-delta}
\delta(A):= \left(\frac{|B^n|}{|A|}\right)^{2-\frac{\lambda}{n}}
\bigl (\E(A^*)-\E(A)\bigr)
\end{equation}
in terms of the {\em Fraenkel asymmetry}
\begin{equation}
\label{eq:def-alpha}
\alpha(A):= \frac{|B^n|}{|A|} 
\inf_{x\in\RR^n} 
\bigl\{ | A\Delta (x+A^*)|\bigr\}\,.
\end{equation}
Here, $x+A^*$ is the ball of the same volume as $A$, centered
at $x$, and $\Delta$ denotes the symmetric difference of
two sets. Both the deficit and the asymmetry are invariant under 
translation, rotation, and dilation.

\begin{theorem} [Stability] 
\label{thm:main}
Fix the dimension $n\ge 2$.
For $\lambda\in (1,n)$, let $\phi_\lambda$ be the Riesz potential
defined by Eqs.~\eqref{eq:def-Riesz} and~\eqref{eq:def-c},
and let $\E_\lambda$ be as in Eq.~\eqref{eq:def-E}.
There exists a constant $C_\lambda>0$ such that
$$
\delta(A)\ge C_\lambda \alpha^2(A)
$$
for every subset $A\subset\RR^n$ of finite positive volume.
\end{theorem}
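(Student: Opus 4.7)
My plan is to combine a \emph{selection principle} in the spirit of Cicalese--Leonardi with a \emph{Fuglede-type spectral estimate} for nearly spherical sets. The argument proceeds in three stages: a qualitative stability reduction, a selection step that replaces an arbitrary near-minimizer by a regular one, and a sharp quadratic estimate for sets that are $C^1$-close to the ball.

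First I would prove a qualitative version: for every $\eps>0$ there exists $\eta>0$ such that $\delta(A)<\eta$ forces $\alpha(A)<\eps$. This follows from standard $L^1$-compactness together with Lieb's characterization of equality cases, since any $L^1$-limit of a sequence with deficit tending to zero must coincide almost everywhere with a translate of $\Chi_{A^*}$. Hence it suffices to prove the theorem under the additional hypothesis $\alpha(A)\le \alpha_0$ for a small universal $\alpha_0>0$.

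Next I would run the selection principle. Argue by contradiction: suppose there is a sequence $\{A_k\}$ with $|A_k|=|B^n|$, $\alpha(A_k)\to 0$, and $\delta(A_k)/\alpha(A_k)^2\to 0$. Replace $A_k$ by a minimizer $\tilde A_k$ of a penalized problem such as
\begin{equation*}
  \min\Bigl\{\,P(E) - \mu_k\,\E_\lambda(E) + \Lambda_k\bigl|\alpha(E)-\alpha(A_k)\bigr|^2 \,:\, |E|=|B^n|\,\Bigr\},
\end{equation*}
with multipliers chosen so that $\tilde A_k$ is still nearly optimal in the ratio $\delta/\alpha^2$ and $\alpha(\tilde A_k)$ has the same order as $\alpha(A_k)$. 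The perimeter term makes each $\tilde A_k$ a quasi-minimizer of perimeter against smooth competitors, and classical De Giorgi--Almgren regularity theory then provides uniform $C^{1,\beta}$ bounds on $\partial \tilde A_k$. Since $\alpha(\tilde A_k)\to 0$, a subsequence converges in $C^{1,\beta}$ to the unit ball after translation.

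For the final, Fuglede step, parametrize
\begin{equation*}
  \partial \tilde A_k = \bigl\{(1+u_k(\omega))\omega : \omega\in S^{n-1}\bigr\},
\end{equation*}
with $\|u_k\|_{C^1(S^{n-1})}\to 0$, and expand $u_k=\sum_{\ell\ge 0} u_{k,\ell}$ in spherical harmonics. The volume constraint kills the leading-order contribution of $u_{k,0}$, and optimality of the translate defining $\alpha$ kills $u_{k,1}$. A second-order Taylor expansion of $\E_\lambda(E^*)-\E_\lambda(E)$ around the unit ball should yield
\begin{equation*}
  \delta(\tilde A_k) \,\ge\, \sum_{\ell\ge 2}\mu_\ell(\lambda)\,\|u_{k,\ell}\|_{L^2(S^{n-1})}^2 + o\bigl(\|u_k\|_{L^2}^2\bigr),
\end{equation*}
with eigenvalues $\mu_\ell(\lambda)$ given by explicit beta-function integrals. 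Combined with the elementary estimate $\alpha(\tilde A_k)^2\lesssim \|u_k\|_{L^1(S^{n-1})}^2\lesssim \|u_k\|_{L^2}^2$, this produces the contradiction provided $\inf_{\ell\ge 2}\mu_\ell(\lambda)>0$. The main obstacle is precisely this spectral gap: one must compute the second variation of $\E_\lambda$ carefully enough to see that every eigenvalue for $\ell\ge 2$ is strictly positive with a uniform lower bound, and to understand why the hypothesis $\lambda>1$ is forced (likely through integrability of the singular kernel against the surface parametrization, which degenerates as $\lambda\downarrow 1$). A secondary difficulty is the selection step itself, where the nonlocal term $\E_\lambda$ and the non-smooth asymmetry penalty must be reconciled with the regularity theory for quasi-minimizers of perimeter.
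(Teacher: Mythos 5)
Your proposal is in the right spirit --- reduce to nearly spherical sets, then run a Fuglede-type spectral argument --- but the reduction step you envision is substantially different from the one the paper uses, and arguably heavier.

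You reduce via a Cicalese--Leonardi selection principle: minimize a penalized functional with an added perimeter term, invoke regularity theory for quasi-minimizers of perimeter to obtain $C^{1,\beta}$ convergence to the ball, and then parametrize $\partial \tilde A_k = \{(1+u_k(\omega))\omega\}$. The paper instead carries out an explicit geometric rearrangement (Section~\ref{sec:reduce}, Proposition~\ref{prop:reduce}): it moves the outlying mass of $A$ inward along rays into thin annuli near $S^{n-1}$, and shows directly --- via Lemmas~\ref{lem:reduce-V} and~\ref{lem:reduce-W}, by comparing how this move affects the first and second variations $\V$ and $\W$ --- that the deficit does not increase, while Lemma~\ref{lem:rigid} keeps the Fraenkel asymmetry fixed by freezing a narrow neighborhood of the unit sphere. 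This lands one in the regime $e^{-\eps}B^n\subset A\subset e^\eps B^n$ with $\eps\lesssim \alpha^{\lambda/n}$ and avoids perimeter regularity theory entirely: no perimeter functional ever appears, and the set is never assumed to have a $C^1$ boundary. Correspondingly, the Fuglede step in the paper is not run on a boundary graph $u$ but on the \emph{radial mass profiles} $M_\pm(\xi)$ of Eq.~\eqref{eq:def-M}, which only require $A$ to sit between two nearby balls. The approximation of $\W$ by the spherical quadratic form $W$ is then controlled through Funk--Hecke multiplier estimates (Lemmas~\ref{lem:FH-r} and~\ref{lem:W}) rather than a second-order Taylor expansion against a $C^1$ parametrization. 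Your concerns at the end are precisely the points the paper handles: the spectral gap is $\beta_1-\beta_2>0$, made explicit through the recursion~\eqref{eq:FH}; and the restriction $\lambda>1$ enters because $\phi_\lambda$ must be locally integrable on $S^{n-1}$ for the spherical quadratic form $W$ to make sense. Your route would very likely go through as well (the penalization approach has been carried out for closely related nonlocal isoperimetric problems), but it imports the De Giorgi--Tamanini regularity machinery where the paper gets by with elementary rearrangements, and it would still require you to verify the same Funk--Hecke spectral gap in the end.
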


The exponent $2$ is best-possible, see Example~\ref{ex:ring},
but we do not determine
the value of the sharp constant $C_\lambda$.
Theorem~\ref{thm:main} extends our previous results 
on the Newton potential from~\cite{BC-2015}.
In that work, we used techniques that had been
developed for the quantitative isoperimetric 
inequality~\cite{FMP} along
with reflection positivity of the Coulomb kernel
to prove Theorem~\ref{thm:main} for $\lambda=2$ in dimension $n=3$. 
In higher dimensions, we obtained a weaker
inequality (with a non-sharp exponent) from
Talenti's comparison principle for solutions of
Poisson's equation~\cite{T}. The proof
of Theorem~\ref{thm:main} that we present here 
instead uses an approach of Fuglede to
obtain explicit estimates for near-spherical sets, 
in combination with lengthy but straightforward
global rearrangements.

Shape optimization problems involving convolution
functionals appear in physical and biological models for 
pair interactions between large numbers of particles
or individuals. Geometric stability results for
such non-local functionals have many potential applications,
from dynamical stability for isotropic steady states
in stellar dynamics~\cite{BG-2004}, 
to the construction of continuum limits in statistical 
mechanics~\cite{CCELM-2009}, and flocking in biological aggregation 
models~\cite{FL20192}. Regardless of their importance,
such problems are not as well-understood as the
classical isoperimetric inequality and other 
inequalities for gradient functionals. 
Fewer explicit estimates are available for
the integral equations which characterize optimal shapes
than for elliptic PDE arising from gradient functionals.
Very recently, a number of results have begun to
address these questions.

The fundamental stability question 
for the Riesz--Sobolev inequality, in the case
where all three functions in the convolution integral
are symmetrized simultaneously, has been settled
by M. Christ in a series of papers since 2013.
In~\cite{C-2017}, he proves a sharp result, where the
geometric asymmetry of a triple of sets is controlled by the 
square root of their deficit in the Riesz--Sobolev inequality. 
This estimate is rather delicate because it can hold
only when the three sets are comparable in size.
Frank and Lieb in~\cite{FL2019} 
extend Christ's results (in the case
of a radially decreasing integral kernel) 
from %characteristic functions of 
sets to densities taking values in the interval $[0,1]$.
In a different direction, Figalli and Jerison 
obtain geometric stability results
for the Brunn--Minkowski inequality~\cite{FJ-2017},
an affine invariant inequality for the volume
of sum sets, which can be seen as a limiting
case of the Riesz--Sobolev inequality.
For the important case when one summand is a ball, the Brunn--Minkowski
inequality becomes a non-local isoperimetric inequality.
Here, sharp stability estimates are due to Figalli, Maggi, 
and Mooney~\cite{FMM-2016}. 
While the results described above
are motivated by insight from
additive combinatorics, convex geometry, and
geometric measure theory, their
proofs tend to rely on direct estimates on how different
parts of a set contribute to the integral functional
under consideration.
The approach of Fuglede that we employ here has 
also been used to give a new proof of the quantitative 
isoperimetric inequality; see the article~\cite{F} 
by N. Fusco for a survey of all of these techniques.

We close out this section with some open questions.

\begin{question}
	Does Theorem~\ref{thm:main} extend to $\lambda\le 1$?  
\end{question}
The spherical integral that we use as a toy model
for $\E_\lambda$ makes sense only for $\lambda>1$ 
(see Section~\ref{sec:outline}, particularly
Eq.~\ref{eq:def-M}))
We suspect that the case of $\lambda = 1$ can be resolved by
taking appropriate limits in our proof.  However, 
for $\lambda \in (0,1)$, our method breaks down,
and a new idea is needed.

\begin{question}
What is the analogous result for functions?  
\end{question}
As discussed in the opening lines,
the Riesz--Sobolev inequality implies
that  the functional $\E_\lambda(f)$
can only increase (for $f\ge 0$) if $f$ is replaced by its
symmetric decreasing rearrangement, $f^*$. It increases strictly,
unless $f$ is a translate of $f^*$.
Measuring the deficit of $f$ as above by
$\delta(f)=\E_\lambda(f^*)-\E_\lambda(f)$,
what is the correct measure of asymmetry, and
what is the correct sharp stability inequality? 

It has been conjectured that
$$
\E_\lambda(f^*)-\E_\lambda(f) \ge C_\lambda
\inf_{a\in \RR^n}
\iint (f^*(x)-f(x\!-\!a))(f^*(y)-f(y\!-\!a))\phi_\lambda(x-y)\, dxdy
$$
for some positive constant $C_\lambda$ (which 
depends on $n$)~\cite{BG-2004}. The right hand
side in this inequality equals the squared distance of
$f$ from the translates of $f^*$ in the negative Sobolev
space $H^{-\frac\lambda 2}$, a space of distributions that
contains $L^{\frac{2n}{n+\lambda}}$.
It is known that this distance is small
whenever the left hand side is small, but no explicit 
bounds are known.

\medskip 
The energy functional $\E_\lambda$ is closely connected with
the notion of \emph{Riesz capacity} of a compact
subset $A \subset \mathbb{R}^n$, given by
$$ {\rm Cap}_{n-\lambda}(A) = \sup_\mu 
\left(\int \int \phi_\lambda (x-y)\,d\mu(x) d\mu(y)\right)^{-1}\,,
$$
where the supremum is taken over all probability
measures $\mu$ supported on $A$.
This quantity has been studied extensively in the literature;
it is  used in Harmonic Analysis and Metric Geometry 
to obtain lower bounds on the Hausdorff dimension of sets.
For further discussion, we direct the reader 
to Chapter 6 of~\cite{F-1986}.

\begin{question} Among sets of given volume,
are near-minimizers of the Riesz capacity 
close to balls?
\end{question}

For the special case of the Newton potential ($\lambda=2$) in
dimension $n\ge 3$, the Riesz capacity agrees with
the more common notion of capacity,
defined by minimizing the Dirichlet integral
of the potential generated by the mass distribution
under suitable constraints. For that notion of capacity,
balls are indeed minimal, and stability follows
via the co-area formula from
from results on the classical isoperimetric
inequality~\cite{mazya-2003,fmp-2009}.

Balls are known to be the unique minimizers of the
Riesz capacity also in the range 
$\lambda\in (0,2)$~\cite{W-1983,betsakos-2004,mendez-2006};
it is not clear whether this continues to hold for $\lambda>2$. 
Theorem~\ref{thm:main} does not apply, because the equilibrium 
measure is generally far from uniform, concentrating
on the boundary for $\lambda\ge 2$ and near the boundary
for $\lambda\in (0,2)$. Still, the estimates
in Sections~\ref{sec:V} and~\ref{sec:W} may prove
useful.

\subsection*{Acknowledgments} 
We would like to thank Alessio Figalli 
for suggesting to look at the approach of Fuglede.
The first author was supported in part by
NSERC Discovery Grant 311685, and the second author was
supported in part by NSF Grant DMS-190654.

While preparing this manuscript, the authors became aware
that Theorem~4 in preprint just posted by Frank and Lieb 
~\cite{FL20192} implies the main stability result
of this article.  Since their proof proceeds 
along different lines, and this
article explores other aspects of the problem, we
feel that the results presented here 
are still relevant and of interest.

%%%%%%%%%%%%%%%%%%%%%%%%%%%%%%%%%%%%%%%%%%%%%%%%%%%%%%%%%%%%%%%%%%%%
\section{Outline of the proof}
\label{sec:outline}

%%%%%%%%%%%%%%%%%%%%%%%%%%%
%\subsection{Notation}

We work in Euclidean space $\RR^n$ of a fixed dimension
$n\ge 2$ (which we routinely suppress in the notation).
The volume and surface area of the unit ball $B^n$
are given by
$$|B^n| = \frac{\pi^{\frac{n}{2}}}{\Gamma(\tfrac{n}{2}+1)}\,,
\qquad 
|S^{n-1}| = n|B^n| = \frac{2 \pi^{\frac{n}{2}}}{\Gamma(\tfrac{n}{2})}\,.
$$
We use lowercase
Roman symbols to denote points in Euclidean space (e.g., $x\in\RR^n$),
and Greek symbols for points on the unit sphere (e.g., $\xi\in S^{n-1}$).
On the unit sphere, we denote by
$|\xi-\eta|$ the chordal distance, by $d\xi$ 
integration against the 
uniform measure inherited from the Lebesgue measure on $\RR^n$,
and by $\|\cdot\|$ the $L^2$-norm.

For $0<\lambda<n$,
the {\em Riesz potentials} are defined by
$$
\phi_\lambda (x)= \frac{1}{c_\lambda}|x|^{-(n-\lambda)}
$$
on $\RR^n$ (see \cite{Stein-book} and \cite{Landkof-book}). 
The constant is conventionally chosen as
\begin{equation}
\label{eq:def-c}
c_\lambda= 2^{\lambda} \pi^{\frac{n}{2}}
\frac{\Gamma(\tfrac{\lambda}{2})}{\Gamma( \tfrac{n-\lambda}{2})}\,.
\end{equation}
With this normalization, their Fourier transform is given by
$\hat \phi_\lambda(p) = (2\pi |p|)^{-\lambda}$ 
(see, for example,~\cite[Theorem 5.9]{LL-book}),
and $\phi_\lambda * \phi_\mu = \phi_{\lambda + \mu}$.
Since $\phi_\lambda$ has a positive Fourier transform,
the convolution functional is {\em positive definite}, that is,
$$
\iint f(x) f(y)\phi_\lambda(x-y)\, dxdy>0
$$ 
for any non-zero function $f$ such that the integral exists.
The value of this double integral equals
$\|f\|^2_{H^{-\lambda}}$, the squared 
Sobolev norm of fractional order $-\lambda$.
The case $\lambda=2$
$$
\phi_2(x) = \frac{1}{(n-2)|S^{n-1}|} |x|^{-(n-2)}\,,
\qquad n\ge 3\
$$
is the {\em Newton potential},
which plays a special role in Mathematical Physics, with
many applications to gravitation and electrostatics.
It is also the fundamental solution of the (negative)
Laplacian.  

%%%%%%%%%%%%%%%%%%%%%%%%%
%\subsection{Outline of the proof}
Consider minimizing the deficit among all
sets $A\subset\RR^n$ of given asymmetry~$\alpha>0$.
Since both $\delta$ and $\alpha$ are bounded functionals, 
it suffices to analyze the situation
when $\alpha$ is small. One
may also scale $A$ to a given volume,
and translate it so that the infimum in the definition
of the Fraenkel asymmetry occurs at $x=0$.

Suppose that $A$ is squeezed between two balls,
\begin{equation}
\label{eq:ring}
e^{-\eps} B^n\subset A\subset e^\eps B^n\,,
\end{equation}
where $\eps$ is a small positive number that will
later be chosen as a function of the asymmetry (with
$\eps\to 0$ as $\alpha \rightarrow 0$, see Proposition~\ref{prop:reduce}).
Denote by $\Phi_\lambda := \Chi_{B^n}*\phi_\lambda$
the potential of the unit ball.  We expand the difference as
$\E(B^n)-\E(A)=\V(A)-\W(A)$, with 
\begin{align}
\label{eq:def-V}
\V(A) = 2\int \bigl(\Chi_{B^n}(x)-\Chi_A(x)\bigr) 
\Phi_\lambda(x)\, dx\,,
\end{align}
and 
\begin{align}
\label{eq:def-W}
\W(A)= \iint\bigl(\Chi_{B^n}(x)-\Chi_A(x)\bigr)
\bigl(\Chi_{B^n}(y)-\Chi_{A}(y)\bigr)
\phi_\lambda(x-y)\, dydx\,.
\end{align}
Since $\Phi_\lambda$ is strictly radially decreasing, 
$\V(A)>0$ by the bathtub principle.
On the other hand, $\W(A)> 0$ since the
Riesz potential is positive definite.
We seek a positive lower bound on the difference
$\V-\W$ in terms of the asymmetry.

Viewing $\V$ as the first variation of $\E$ about $B^n$, and 
$\W$ as the second variation,
one would expect $\V(A)$ to vanish linearly and $\W(A)$ to vanish 
quadratically as $\alpha\to 0$,
and thus $\E(B^n)-\E(A)$ to be comparable to $\V(A)$ 
for $A$ sufficiently close to $B^n$.
However, if $A$ has the same volume as $B^n$,
then in fact $\V(A)$ also vanishes quadratically, 
since the value of $\Phi_\lambda$ is almost constant
in a neighborhood of the unit sphere.  In order to obtain 
a positive lower bound on $\V(A)-\W(A)$,
the competing terms have to be estimated with care.

Our strategy, based on work of Fuglede~\cite{F-1989},
is to approximate $\V$ and $\W$ by
integral functionals  on the unit sphere, 
and then solve the resulting minimization problem
by expanding in spherical harmonics.
For each direction $\xi\in S^{n-1}$, consider the contribution
of the ray through $\xi$ to the
volume of $A\setminus B^n$ and $B^n\setminus A$, respectively, 
given by
\begin{equation}
\label{eq:def-M}
M_+(\xi)= \int_1^\infty \Chi_{A}(r\xi)\, r^{n-1}\, dr\,, \quad 
M_-(\xi)= \int_0^1 \
(1-\Chi_{A}(r\xi))\, r^{n-1}\, dr\,.
% \qquad \xi \in S^{n-1}\,.
\end{equation}
By construction, $|A|-|B^n|=\int_{S^{n-1}}(M_+-M_-)\,d\xi$.
We scale $A$ to the same volume as $B^n$ and center
it near the origin to obtain
\begin{equation}
\label{eq:constraints}
\begin{split}
& \int_{S^{n-1}} M_+(\xi)\, d\xi = \int_{S^{n-1}} M_-(\xi)\, d\xi
\ge \frac{\alpha}{2}\,,\\
& \int_{S^{n-1}} \xi \bigl(M_+(\xi)-M_-(\xi)\bigr)\, d\xi = 0\,.
\end{split}
\end{equation}

We next express $\delta(A)=\V(A)-\W(A)$ in terms of $M_+$ and $M_-$.
The first variation decreases if mass from $A$ is moved 
inwards along the rays towards the origin. As $\eps\to 0$, we 
obtain
\begin{eqnarray}
\label{eq:V}
\V(A) 
& \gtrapprox \ -\Phi_\lambda\Big\vert_{S^{n-1}}\,
{\displaystyle \int_{S^{n-1}} (M_+\!-\!M_-)\, d\xi}
\ + \ |\nabla\Phi_\lambda|\Big\vert_{S^{n-1}}
\bigl(\|M_+\|^2 +\|M_-\|^2\bigr)\\
\notag 
&=: V(M_+,M_-)\,,
\end{eqnarray}
where $\|\cdot\|$ denotes the $L^2$-norm on $S^{n-1}$.
%If $A\cup B^n$ and $A\cap B^n$ happen
%to be star-shaped with respect to the origin then
%in fact $\V(A)\approx V(M_+, M_-)$.
For the second variation, we assume that
$\lambda>1$ and approximate
\begin{align}
\label{eq:W}
\W(A) &\approx  \ \int_{S^{n-1}}\int_{S^{n-1}} M(\xi) M(\eta)
\phi_\lambda(\xi-\eta) \, d\xi d\eta =: \ W(M)\,,
\end{align}
where $M=M_+-M_-$. Thus $\delta(A)\gtrapprox V(M_+,M_-) -W(M_+\!-\!M_-)$.

\subsection*{The toy model}
Consider minimizing $V-W$ 
over pairs of nonnegative functions 
$M_+, M_-$ on the sphere, subject to the constraints in 
Eq.~\eqref{eq:constraints}. 
We diagonalize the quadratic form
$W$ by expanding $M_+$ and $M_-$
in spherical harmonics.  The eigenvalues of $W$ are 
the Funk-Hecke multipliers associated
with $\phi_\lambda$, which form a strictly decreasing
sequence $(\beta_k)_{k\ge 0}$
satisfying $\lim \beta_k=0$, see Eq.~\eqref{eq:FH}.
Since the constraints in Eq.~\eqref{eq:constraints}
require $M=M_+-M_-$ to be orthogonal to the spherical harmonics of
degree zero and one,
$$ %\begin{align} \label{eq:w}
W(M_+\!-\!M_-) \le \beta_2 \|M_+\!-\!M_-\|^2\le 
\beta_2 \bigl(\|M_+\|^2+\|M_-\|^2\bigr)\,,
$$% \end{align}
see Lemma~\ref{lem:toy}.  We have used that
$M_+$ and $M_-$ are nonnegative to
drop the mixed term.
Returning to Eq.~\eqref{eq:V}, note that
the first integral vanishes since $|A|=|B^n|$,
and the coefficient of the second term is given by
$|\nabla\Phi_\lambda|\Big\vert_{S^{n-1}} = \beta_1  >  \beta_2$,
see Eq.~\eqref{eq:coeff1}. Therefore
$$V(M_+,M_-)\ge \beta_1\bigl(\|M_+\|^2+\|M_-\|^2\bigr)\,.
$$
We conclude that
\begin{equation}
\label{eq:toy}
V(M_+,M_-)-W(M_+\!-\!M_-) 
\ge  (\beta_1-\beta_2) \bigl(\|M_+\|^2+ \|M_-\|^2\bigr)\,.
\end{equation}
Since $\|M_+\|^2+\|M_-\|^2\ge \frac{\alpha^2}{2|S^{n-1}|}$
by Schwarz' inequality and Eq.~\eqref{eq:constraints},
this establishes the conclusion of the theorem
in the toy model. 
Note that the toy model makes sense only for $\lambda>1$,
where the Riesz potential is locally integrable on 
$S^{n-1}$.

\bigskip 
The main part of the paper is dedicated to justifying 
the approximations of $\V$ and $\W$ by the
spherical integrals $V$ and $W$ under the 
assumption of~\eqref{eq:ring}.
This is done in Sections~\ref{sec:V} and~\ref{sec:W}.
For $\V$, we rearrange $A\setminus B^n$ and $B^n\setminus A$ into
thin ring-shaped sets adjacent to the unit sphere,
and then Taylor expand $\Phi_\lambda(r\xi)$
about $r=1$.  For $\W$, we cannot simply 
expand $\phi_\lambda(r\xi-s\eta)$
about $r=s=1$ because of the singularity at $\xi=\eta$.
Instead, we represent $\Chi_A(r\xi)$ in spherical harmonics 
and control $\W(A)-W(M)$ as $\eps\to 0$
by bounds on the multiplication operator
associated with $\phi_\lambda$, see Lemma~\ref{lem:FH-r}.
In Section~\ref{sec:reduce}, we establish
the geometric conditions in Eqs.~\eqref{eq:ring} 
and ~\eqref{eq:constraints}.
Finally, Theorem~\ref{thm:main} is proved in Section~\ref{sec:main}.

\section{Some deformations of the ball}
\label{sec:deform} Setting aside for the moment the
validation of the toy model,
we illustrate its use by computing some examples.
The first two examples determine the coefficients in Eq.~\eqref{eq:V} 
in terms of the multipliers $\beta_0$ and $\beta_1$.

\begin{example}[Translation]
\label{eq:translate}
Let $A=-\eps e_n +B^n$.  Clearly, $\delta(A)=0$, and thus $\V(A)=\W(A)$.
The functions $M_+$  and $M_-$
from Eq.~\eqref{eq:def-M} are the positive and negative
parts of $ M(\xi) = \eps e_n\cdot \xi + O(\eps^2)$.
For the first variation, Eq.~\eqref{eq:V} yields
$$
\V(A) \approx V(M_+,M_-)
\approx \frac{1}{n} \, 
|\nabla \Phi_\lambda|\,\Big\vert_{S^{n-1}}\, |S^{n-1}|\,
\eps^2 \,,
$$
and for the second variation, Eq.~\eqref{eq:W} yields
$$
\W(A) \approx W(M) \approx
\frac{1}{n}\beta_1 \,|S^{n-1}|\,\eps^2\,,
$$
since $M$ is to leading order a spherical harmonic of degree one.
The errors are of order $O(\eps^3)$ and $o(\eps^2)$,
respectively, as $\eps\to 0$, see
Propositions~\ref{prop:V} and~\ref{prop:W}.
Comparing coefficients, we conclude that
\begin{equation}
\label{eq:coeff1}
|\nabla\Phi_\lambda|\Big\vert_{S^{n-1}}=\beta_1\,.
\end{equation}
\end{example}

\begin{example}[Dilation]
\label{eq:dilate}
Let $A=e^\eps B^n$, where $\eps>0$ is small.  By scaling, $|A|-|B^n| = (e^{n\eps}-1)|B^n|$, and
$$
\V(A)-\W(A)= \E_\lambda(B^n)-\E_\lambda(A) =
-(e^{(n+\lambda)\eps}\!-\!1)\E_\lambda(B^n)\,.
$$
Obviously $M_+ \equiv \eps + O(\eps^2)$ while $M_-$ vanishes
(the roles would be reversed if $\eps<0$).
For the first variation, Eq.~\eqref{eq:V} yields
\begin{align*}
\V(A) & \approx 2
\Phi_\lambda\Big\vert_{S^{n-1}}\, \bigl(|B^n|-|A|\bigr)
+ |\nabla\Phi_\lambda|\Big\vert_{S^{n-1}}\,
\bigl(\|M_+\|^2+\|M_-\|^2\bigr)\\
&\approx  -
\Phi_\lambda\Big\vert_{S^{n-1}}\,|S^{n-1}|\, (2\eps +\eps^2n) 
   + \beta_1 \,|S^{n-1}|\, \eps^2\,.
\end{align*} 
In the first term, we have expanded to second order
in $\eps$ and replaced $n|B^n|$ with $|S^{n-1}|$.
In the second term, we have inserted the value of the gradient
from Eq.~\eqref{eq:coeff1}.
For the second variation, Eq.~\eqref{eq:W} yields
$$
\W(A) \approx W(M) \approx \beta_0 \,|S^{n-1}|\, \eps^2\,,
$$
since $M=M_+$ is constant on $S^{n-1}$. 
As in the previous example, all errors are of order $o(\eps^2)$,
see Propositions~\ref{prop:V} and~\ref{prop:W}.
Comparing the coefficients of $\eps$ and $\eps^2$,
% \begin{align*}
%(n+\lambda) \E_\lambda(B^n) & = 
%2 \left(\Phi_\lambda\Big\vert_{S^{n-1}}\right)|S^{n-1}|\,,\\
%\frac{(n+\lambda)^2}{2}\E_\lambda(B^n) 
%&= \left(n \Phi_\lambda\Big\vert_{S^{n-1}}
%- \beta_1 +\beta_0\right)
%|S^{n-1}|\,,
%\end{align*}
we conclude that
\begin{equation}
\label{eq:coeff0}
\Phi_\lambda\Big\vert_{S^{n-1}}
= \frac{\beta_0 - \beta_1}{\lambda}\,,\qquad
\E_\lambda(B^n)= 2\frac{\beta_0 - \beta_1}{\lambda(n+\lambda)}
\,|S^{n-1}|\,.
\end{equation}
\end{example}

The next example demonstrates that the quadratic 
scaling in Theorem~\ref{thm:main} is sharp.  

\begin{example}[Ring]
\label{ex:ring} 
Take $M_+= M_-\equiv c\eps$, 
where $c= \left(2|S^{n-1}|\right)^{-\frac12}$
so that $\|M_+\|^2 +\|M_-\|^2=\eps^2$.
Let $A\!\setminus\!B^n$ and $B^n\!\setminus \!A$ be narrow annuli
of the appropriate width that meet at the unit sphere.
For the asymmetry $\alpha=\int_{S^{n-1}}(M_+ +M_-)\, d\xi$, we
find $\alpha^2(A) \approx 2\,|S^{n-1}|\, \eps^2$,
since  the functions $M_\pm$ are constant.
Further, $\W(A)\approx 0$ since $M\equiv 0$,
and by Eqs.~\eqref{eq:V} and~\eqref{eq:coeff1}
$$
\delta(A)\approx \V(A)\approx \beta_1 \eps^2
\approx \frac{\beta_1}{2\,|S^{n-1}|} \alpha^2(A)
$$
up to errors of order $o(\eps^2)$.
\end{example}

In the final example, the ball is squeezed 
into an approximate ellipsoid. 

\begin{example}[Squeeze]
\label{ex:squeeze}
Let 
$M(\xi)=c \eps (n(\xi\cdot e_n)^2-1)$,
where $c= \left(\frac{4n}{n+2}|S^{n-1}|\right)^{-\frac12}$
is chosen
so that $\|M\|=\eps$, and let $M_\pm$ its positive and negative
parts. Define the corresponding
set $A$ by its radial function, $R(\xi)$.
Since $M$ is a spherical harmonic of degree  
two, $\int_{S^{n-1}}M\, d\xi=0$,
and the deformation is volume-preserving.
By Schwarz' inequality, the asymmetry satisfies
$\alpha^2(A) = \left(\int_{S^{n-1}}|M|\, d\xi\right)
\le |S^{n-1}|\,\eps^2$. 
Eqs.~\eqref{eq:V} and~\eqref{eq:W} yield $ \V(A) \approx
\beta_1\eps^2$ and $\W(A)\approx \beta_2\eps^2$,
with errors of order $o(\eps^2)$. Therefore, 
the deficit is given by
$$\delta(A) \approx (\beta_1-\beta_2)\eps^2
\ge \frac{\beta_1-\beta_2}{|S^{n-1}|}\alpha^2(A)\,.
$$
\end{example}

%%%%%%%%%%%%%%%%%%%%%%%%%%%%%%%%%%%%%%%%%%%%%%%%%%%%%%%%%%%%%%%%%%%%
\section{First variation}
\label{sec:V}

In this section, we estimate $\V(A)$ for sets 
that satisfy Eq.~\eqref{eq:ring}, and justify the approximation
in Eq.~\eqref{eq:V}. Fix $\lambda\in (0,n)$,
and recall that
$\Phi_\lambda=\phi_\lambda * \Chi_{B^n}$
denotes the potential of the unit ball.

\begin{prop}[Projection to the sphere]
\label{prop:V} 
For $A\subset \RR^n$, define $M_+$ and $M_-$ by Eq.~\eqref{eq:def-M}. 
Let $\V(A)$ be the first variation in Eq.~\eqref{eq:def-V},
and let $V(M_+,M_-)$ be the spherical integral on the right hand side
of Eq~\eqref{eq:V}.
Then
\begin{align}
\label{eq:VV}
\V(A) \ \ge\  V(M_+,M_-) + O(\eps)\bigl(\|M_+\|^2+ \|M_-\|^2\bigr) 
\end{align}
as $\eps\to 0$ for every $A$ satisfying Eq.~\eqref{eq:ring}.
\end{prop}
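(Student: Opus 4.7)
The plan is to (i) write $\V(A)$ as a one-dimensional integral along each ray through the origin, (ii) apply a bathtub-type rearrangement on each ray to reduce to thin shells adjacent to the unit sphere, and (iii) Taylor expand $\Phi_\lambda$ about $r=1$ to isolate the two leading terms of $V(M_+,M_-)$, absorbing the rest into an $O(\eps)\bigl(\|M_+\|^2+\|M_-\|^2\bigr)$ remainder.

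Polar coordinates turn \eqref{eq:def-V} into
$$
\V(A)=2\int_{S^{n-1}}\!\!\Bigl(\int_0^1 (1\!-\!\Chi_A(r\xi))\Phi_\lambda(r)r^{n-1}\,dr-\int_1^\infty \Chi_A(r\xi)\Phi_\lambda(r)r^{n-1}\,dr\Bigr)d\xi,
$$
and by \eqref{eq:ring} only the range $r\in[e^{-\eps},e^\eps]$ contributes. Because $\Phi_\lambda$ is strictly radially decreasing, the one-dimensional bathtub principle (applied against the measure $r^{n-1}\,dr$) gives, along each ray, a pointwise lower bound obtained by concentrating the indicator of $B^n\setminus A$ on the inner shell $[r_-^*(\xi),1]$ with $(1-(r_-^*)^n)/n=M_-(\xi)$, and the indicator of $A\setminus B^n$ on the outer shell $[1,r_+^*(\xi)]$ with $((r_+^*)^n-1)/n=M_+(\xi)$. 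Both rearrangements move the indicators toward $r=1$, where $\Phi_\lambda$ is largest on each side. This yields
$$
\V(A)\geq 2\int_{S^{n-1}}\!\!\Bigl(\int_{r_-^*(\xi)}^1\Phi_\lambda(r)r^{n-1}\,dr-\int_1^{r_+^*(\xi)}\Phi_\lambda(r)r^{n-1}\,dr\Bigr)d\xi.
$$

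Now I Taylor expand $\Phi_\lambda(r)=\Phi_\lambda(1)+\Phi_\lambda'(1)(r-1)+O((r-1)^2)$ about $r=1$, using $\Phi_\lambda'(1)=-|\nabla\Phi_\lambda|\big|_{S^{n-1}}$. Solving the shell-volume equations gives $r_\pm^*(\xi)=1\pm M_\pm(\xi)+O(M_\pm(\xi)^2)$, and by \eqref{eq:ring}, $M_\pm(\xi)\leq C\eps$ pointwise. A direct calculation then produces
$$
\int_1^{r_+^*(\xi)}\Phi_\lambda(r)r^{n-1}\,dr=\Phi_\lambda(1)M_+(\xi)-\tfrac{1}{2}|\nabla\Phi_\lambda|\big|_{S^{n-1}}M_+(\xi)^2+O(\eps)M_+(\xi)^2,
$$
with the analogous identity for the inner shell carrying the opposite sign on the quadratic piece. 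Substituting back into the lower bound, integrating in $\xi$, and collecting terms recovers exactly the structure of $V(M_+,M_-)$ as in \eqref{eq:V} together with the stated $O(\eps)\bigl(\|M_+\|^2+\|M_-\|^2\bigr)$ error.

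The principal obstacle is verifying enough regularity of $\Phi_\lambda$ near $r=1$ for the Taylor remainder to be genuinely of size $(r-1)^2$. For the Newton case $\lambda=2$, explicit computation shows $\Phi_2$ is $C^{1,1}$ on $\RR^n$, with $\Phi_2''$ suffering a jump at $r=1$; the same $C^{1,1}$ picture holds throughout the range $\lambda\in(1,n)$ by direct inspection of the radial formula $\Phi_\lambda(r)=(\phi_\lambda*\Chi_{B^n})(re_n)$, one-sided at the sphere. A minor bookkeeping point is that the bathtub rearrangement is against $r^{n-1}\,dr$, which introduces the nonlinear relations $r_\pm^*(\xi)=1\pm M_\pm+O(M_\pm^2)$ and associated $O(M_\pm^3)=O(\eps\,M_\pm^2)$ discrepancies that are harmlessly absorbed into the error term.
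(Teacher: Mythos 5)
Your proof follows essentially the same route as the paper's: writing $\V$ in polar coordinates, pushing $A\setminus B^n$ and $B^n\setminus A$ inward along each ray to thin shells abutting the sphere (which can only decrease $\V$), and then Taylor expanding $\Phi_\lambda$ about $r=1$ to match the structure of $V(M_+,M_-)$ with an $O(\eps)(\|M_+\|^2+\|M_-\|^2)$ remainder. One small caution about the regularity point you raise: your assertion that the one-sided $C^{1,1}$ picture persists for all $\lambda\in(1,n)$ is not quite accurate for $\lambda\in(1,2)$, where the one-sided radial second derivative of $\Phi_\lambda$ blows up like $(r-1)^{\lambda-2}$ as $r\to 1$; the Taylor remainder is then only $O((r-1)^\lambda)$ rather than $O((r-1)^2)$, which still yields an error of order $O(\eps^{\min(1,\lambda-1)})(\|M_+\|^2+\|M_-\|^2)$ --- sufficient for the theorem, since only $o(1)$ is needed, but worth stating the exponent honestly. (The paper glosses over this same point without comment.)
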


\begin{proof} By definition,
\begin{align}
\label{eq:V-proof-1}
\V(A) = 
2 \left(
\int_{B^n\setminus A} \Phi_\lambda(x)\, dx
- \int_{A\setminus B^n} \Phi_\lambda(x)\, dx 
\right)\,.
\end{align}
In polar coordinates, the second integral on
the right hand side takes the form
$$
\int_{A\setminus B^n} \Phi_\lambda(x)\, dx =
\int_{S^{n-1}} \int_1^\infty 
\Chi_A(r \xi)\Phi_\lambda(r\xi)\,r^{n-1}\, dr d\xi \,.
$$
Consider a rearrangement of $A\setminus B^n$
where mass is moved inwards along rays 
in such a way that for each $\xi\in S^{n-1}$ the
value of $M_+(\xi)$ is preserved and the intersection with the ray
through $\xi$ becomes an interval $[1,1\!+\!R_+(\xi)]$.  
By construction, $0\le R_+\le e^\eps-1$, and 
$M_+ = \frac 1n ((1\!+ \!R_+)^n-1)$.
Since $\Phi_\lambda$ is radially decreasing,
this rearrangement increases the value of the integral,
$$
\int_{A\setminus B^n} \Phi_\lambda(x)\, dx
\le \int_{S^{n-1}} \int_1^{1+R_+(\xi)}
\Phi_\lambda(r\xi)\, r^{n-1}\, dr\, d\xi\,.
$$
From the Taylor expansion of $\Phi_\lambda(r\xi)$ about
$r=1$, we obtain for the inner integral
\begin{align*}
\int_1^{1+R_+(\xi)} \Phi_\lambda(r\xi)\, r^{n-1}\, dr
&= \Phi_\lambda(\xi) M_+(\xi)
+\bigl( \xi\!\cdot\,\nabla\Phi_\lambda(\xi)+ O(\eps)\bigr)
 \int_1^{1+R_+(\xi)}
(r-1)r^{n-1}\, dr \\
&= \Phi_\lambda(\xi) \, M_+(\xi)
- \frac12 \left(|\nabla\Phi_\lambda(\xi)| + O(\eps\right)
|M_+(\xi)|^2 \,.
\end{align*}
In the last step, we have integrated explicitly
and used that $R_+=(1+O(\eps))M_+$.
The quadratic term appears with a negative sign, because
$\Phi_\lambda$ is radially decreasing.
All error estimates hold uniformly 
for $0\le R_+\le e^\eps-1$ and $\xi\in S^{n-1}$.
Integration over $S^{n-1}$ yields
\begin{equation}
\label{eq:V-proof+}
\int_{A\setminus B^n} \Phi_\lambda(x)\, dx
\ \le \ \left(\Phi_\lambda \Big\vert_{S^{n-1}}\right)
|A\!\setminus \!B^n|
\ - \ \frac12 
\left(|\nabla\Phi_\lambda|\, \Big\vert_{S^{n-1}}+ O(\eps)\right)
\|M_+\|^2 \,.
\end{equation}

Similarly, the first integral on the right hand side of
Eq.~\eqref{eq:V-proof-1} takes the form
$$
\int_{B^n\setminus A} \Phi_\lambda(x)\, dx =
\int_{S^{n-1}} \int_0^1 
(1\!-\!\Chi_A(r \xi))\Phi_\lambda(r\xi)\,r^{n-1}\, dr d\xi \,.
$$
This decreases under the rearrangement of $A\cap B^n$ that moves
mass inwards such that the intersection with each ray defined
by a direction $\xi\in S^{n-1}$ 
is replaced with the interval $[0, 1\!-\!R_-(\xi)]$,
where $0\le R_-\le 1-e^{-\eps}$, and
$M_- = \frac 1n (1-(1\!-\! R_-)^n)$,
$$
\int_{B^n\setminus A} \Phi_\lambda(x)\, dx
\ge \int_{S^{n-1}} \int_{1-R_-(\xi)}^1
\Phi_\lambda(r\xi)\, r^{n-1}\, dr\, d\xi\,.
$$
As above, we use the Taylor expansion of $\Phi_\lambda(r\xi)$ about
$r=1$ to obtain 
\begin{equation}
\label{eq:V-proof-2}
\int_{B^n\setminus A} \Phi_\lambda(x)\, dx
\ \ge \ \left( \Phi_\lambda \Big\vert_{S^{n-1}}\right)
|B^n\!\setminus \!A|
\ + \ \frac12
\left(|\nabla\Phi_\lambda|\, \Big\vert_{S^{n-1}}+O(\eps)\right)
\|M_-\|^2 \,.
\end{equation}
This time, the quadratic term appears with the positive sign.
The proof is completed by subtracting Eq.~\eqref{eq:V-proof+}
from Eq.~\eqref{eq:V-proof-2}.
\end{proof}

%%%%%%%%%%%%%%%%%%%%%%%%%%%%%%%%%%%%%%%%%%%%%%%%%%%%%%%%%%%%%%%%%%%%
\section{Second variation}
\label{sec:W}

In this section, we estimate $\W(A)$ for sets satisfying
Eq.~\eqref{eq:ring},
and  justify the approximation in Eq.~\eqref{eq:W}.
Fix $\lambda\in (1,n)$.  

\begin{prop}[Projection to the sphere]
\label{prop:W} 
For $A\subset \RR^n$, define $M_+$ and $M_-$ by Eq.~\eqref{eq:def-M}. 
Let $\W(A)$ be the first variation in Eq.~\eqref{eq:def-W},
and let $W(M_+-M_-)$ be the spherical integral on the right hand side
of Eq~\eqref{eq:W}.
Then
\begin{equation}
\label{eq:WW}
\W(A)= W(M_+-M_-) + o(1)\bigl(\|M_+\|^2+ \|M_-\|^2\bigr)
\end{equation}
as $\eps\to 0$ for every A satisfying Eq. (2.2).
\end{prop}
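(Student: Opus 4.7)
The plan is to diagonalize both $\W(A)$ and $W(M)$ in spherical harmonics and to compare mode by mode. Write $u(x)=\Chi_{B^n}(x)-\Chi_A(x)$, which by hypothesis~\eqref{eq:ring} is supported in the annulus $\{e^{-\eps}\le|x|\le e^\eps\}$. Expand the angular slice at each radius $r$ in an orthonormal basis $\{Y_{k,j}\}$ of spherical harmonics on $S^{n-1}$,
\[
u(r\xi)=\sum_{k,j}u_{k,j}(r)\,Y_{k,j}(\xi),
\]
so that $M_{k,j}=-\int_0^\infty u_{k,j}(r)\,r^{n-1}\,dr$ are the spherical harmonic coefficients of $M=M_+-M_-$. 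Rotational invariance of $\phi_\lambda(r\xi-s\eta)$ in the variables $\xi,\eta$ (for fixed $r,s$), together with the Funk--Hecke formula, yields
\[
\int_{S^{n-1}}\!\int_{S^{n-1}} Y_{k,j}(\xi)\,Y_{k',j'}(\eta)\,\phi_\lambda(r\xi-s\eta)\,d\xi\,d\eta = \delta_{kk'}\delta_{jj'}\lambda_k(r,s),
\]
with $\lambda_k(1,1)=\beta_k$. Substituting this into the polar-coordinate forms of $\W(A)$ and $W(M)$ gives the key identity
\[
\W(A)-W(M)=\sum_{k,j}\iint u_{k,j}(r)\,u_{k,j}(s)\,\bigl[\lambda_k(r,s)-\beta_k\bigr]\,r^{n-1}s^{n-1}\,dr\,ds.
\]

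The heart of the argument is to invoke Lemma~\ref{lem:FH-r}, which is expected to provide uniform control $\sup_{k\ge 0}|\lambda_k(r,s)-\beta_k|\le h(\eps)$ with $h(\eps)\to 0$ for $r,s\in[e^{-\eps},e^\eps]$. In operator language, the integral operators $(T_{r,s}f)(\xi)=\int_{S^{n-1}}\phi_\lambda(r\xi-s\eta)f(\eta)\,d\eta$, all diagonal in the spherical-harmonic basis with eigenvalues $\lambda_k(r,s)$, converge in operator norm to $T_{1,1}$ as $(r,s)\to(1,1)$. The uniformity in $k$ follows from the decay $\beta_k\to 0$ (which disposes of large $k$ since the corresponding multipliers are small in both operators), combined with pointwise convergence $\lambda_k(r,s)\to\beta_k$ for each fixed $k$, which holds by dominated convergence using the integrability of $\phi_\lambda$ on $S^{n-1}$. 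This last point is precisely what forces the restriction $\lambda>1$.

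The final step is to convert the mode-by-mode estimate into the form $o(1)\bigl(\|M_+\|^2+\|M_-\|^2\bigr)$. A direct application of the operator bound combined with Cauchy--Schwarz in $r$ gives only a bound of order $h(\eps)\cdot \eps\cdot|A\Delta B^n|$, which is too crude when the Fraenkel footprint is broad yet $\|M_\pm\|$ is small. To remedy this, decompose
\[
u_{k,j}(r)\,r^{n-1}=-M_{k,j}\,g(r)+w_{k,j}(r),
\]
where $g$ is a normalized bump concentrated at $r=1$ and $\int w_{k,j}(r)\,dr=0$, and split $\W(A)-W(M)$ into three pieces: a diagonal term proportional to $M_{k,j}^2\bigl[\iint g(r)g(s)\lambda_k(r,s)\,dr\,ds-\beta_k\bigr]$, which is directly controlled by Lemma~\ref{lem:FH-r} and sums to $o(1)\|M\|^2$; a cross term in which, because $\int w_{k,j}=0$, only the modulus-of-continuity error $\lambda_k(r,1)-\beta_k$ survives; and a residual quadratic form in $w_{k,j}$ corresponding to the $H^{-\lambda/2}(\RR^n)$-energy of a signed measure supported in the annulus with zero radial average along every ray, which can be bounded by integrating by parts once in $r$ and once in $s$ against a differentiated version of the multiplier estimate.

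The principal obstacle is the singularity of $\phi_\lambda(r\xi-s\eta)$ at $r\xi=s\eta$, which rules out the kind of Taylor expansion in $(r,s)$ about $(1,1)$ that worked for $\V$ in Section~\ref{sec:V}. The Funk--Hecke diagonalization isolates this singularity degree by degree, converting the problem into the scalar estimate of Lemma~\ref{lem:FH-r}; extracting $o(1)$ control that is uniform across all spherical-harmonic degrees, together with the loss-free passage from this control to an estimate in $\|M_+\|^2+\|M_-\|^2$, is the technically most delicate ingredient of the proof.
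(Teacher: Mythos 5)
Your diagonalization of $\W$ and $W$ in spherical harmonics is correct, and your key identity
\[
\W(A)-W(M)=\sum_{k,j}\iint u_{k,j}(r)\,u_{k,j}(s)\,\bigl[\lambda_k(r,s)-\beta_k\bigr]\,(rs)^{n-1}\,dr\,ds
\]
is valid. You also recognize the central numerical difficulty: a direct Cauchy--Schwarz in $r$ estimates the double sum by $h(\eps)\cdot\eps\cdot|A\Delta B^n|$, which is \emph{not} $o(1)\bigl(\|M_+\|^2+\|M_-\|^2\bigr)$, because $\|M_\pm\|^2$ can be as small as $\alpha^2$ while $\eps|A\Delta B^n|\sim\eps\alpha$. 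And your mechanism for uniformity in $k$ (decay $\beta_k\to0$ for large $k$, pointwise convergence of multipliers for fixed $k$) is exactly the paper's argument in the proof of Proposition~\ref{prop:W}. Up to this point you are on track.

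The gap is in your proposed fix. The decomposition $u_{k,j}(r)r^{n-1}=-M_{k,j}\,g(r)+w_{k,j}(r)$ with $\int w_{k,j}\,dr=0$ does not appear to close. The diagonal term is fine, but the cross term is $\lesssim h(\eps)\sum_{k,j}|M_{k,j}|\,\|w_{k,j}\|_{L^1}$, and the only bound available on $\bigl(\sum_{k,j}\|w_{k,j}\|_{L^1}^2\bigr)^{1/2}$ is of size $(\eps\alpha)^{1/2}$, which is not controlled by $\|M\|$ when $\|M\|\sim\alpha\ll\sqrt{\eps\alpha}$. The ``residual quadratic form in $w_{k,j}$'' is dispatched with a remark about integrating by parts against a differentiated multiplier estimate, but this is not carried out, requires bounds on $\partial_r\partial_s\lambda_k$ that you have not established uniformly in $k$, and on inspection again lands you back at $\eps\alpha$ rather than $\|M_\pm\|^2$. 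The underlying obstruction is that once you expand $u(r\cdot)$ in spherical harmonics at each radius, the sign structure of $\Chi_{B^n}-\Chi_A$ along rays is lost, and the radial $L^1$ size of the coefficients $u_{k,j}(r)$ can far exceed $|M_{k,j}|$.

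What the paper does instead, in Lemma~\ref{lem:W}, is keep the radial integration in physical (not Fourier) variables and exploit the one-sided structure: write $\Chi_{B^n}-\Chi_A=\Chi_{A_-}-\Chi_{A_+}$ with $A_\pm=A\setminus B^n$ and $B^n\setminus A$, and for each of the four cross terms note that
\[
\Bigl|\int_0^\infty\!\!\int_0^\infty\Chi_{A_i}(r\xi)\Chi_{A_j}(s\eta)\bigl(\phi_\lambda(r\xi-s\eta)-\phi_\lambda(\xi-\eta)\bigr)(rs)^{n-1}\,dr\,ds\Bigr|\le M_i(\xi)M_j(\eta)\,\psi_{\lambda,\eps}(\xi,\eta),
\]
where $\psi_{\lambda,\eps}$ is the explicit kernel from Eq.~\eqref{eq:bounds-phi} dominating $|\phi_\lambda(r\xi-s\eta)-\phi_\lambda(\xi-\eta)|$ \emph{uniformly} for $r,s\in[e^{-\eps},e^\eps]$. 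The nonnegativity of $\Chi_{A_i}$ is what allows the radial integral to be bounded directly by $M_i(\xi)$, producing an error of the clean form $\|L_{\lambda,\eps}\|_{L^2\to L^2}\|M_i\|\,\|M_j\|$. Only after that does one pass to spherical harmonics, to estimate $\|L_{\lambda,\eps}\|$ by the supremum of the multipliers $e^{(n-\lambda)\eps}\beta_k-e^{-2(n-\lambda)\eps}b_k(e^{-2\eps})$, invoking Lemma~\ref{lem:FH-r} for their positivity. This reordering -- pointwise domination in physical space first, diagonalization of the resulting spherical operator second -- is the missing ingredient in your argument, and it makes the passage to $\|M_\pm\|^2$ loss-free without any additional decomposition.
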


As in the previous section, we work in polar coordinates.

\begin{lem}[Error term]
\label{lem:W}
For $\lambda\in (1,n)$ and $\eps>0$, set
$$\psi_{\lambda,\eps}(\xi,\eta):= e^{(n-\lambda)\eps}\phi_\lambda(\xi-\eta)
- e^{-2(n-\lambda)\eps} \phi_\lambda (e^{-2\eps}\xi-\eta)\,,
$$
and let $L_{\lambda,\eps}$ be the linear operator
defined by
\begin{equation}
\label{eq:L}
(L_{\lambda,\eps}M)(\xi):= \int_{S^{n-1}} \psi_{\lambda,\eps}(\xi,\eta)
M(\eta)\, d\eta
\end{equation}
for $M\in L^2$ and $\xi\in S^{n-1}$.
If $A\subset R^n$ satisfies Eq.~\eqref{eq:ring}, then
$$
|\W(A)-W(M)| \le  2 \|L_{\lambda,\eps}\|_{L^2\to L^2}\,
\bigl(\|M_+\|+\|M_-\|)^2\,.
$$
\end{lem}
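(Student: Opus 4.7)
The plan is to cast $\W(A) - W(M)$ as a double spherical integral whose radial kernel is dominated pointwise by $\psi_{\lambda,\eps}$, and then invoke the operator norm of $L_{\lambda,\eps}$. I would begin by setting $g := \Chi_{B^n} - \Chi_A$ and noting that the definitions of $M_\pm$ in Eq.~\eqref{eq:def-M} give $M(\xi) = -\int_0^\infty g(r\xi)\,r^{n-1}\,dr$. This identity lets me rewrite
$$W(M) = \int_{S^{n-1}}\!\int_{S^{n-1}}\!\int_0^\infty\!\int_0^\infty g(r\xi)\,g(s\eta)\,\phi_\lambda(\xi - \eta)\,r^{n-1}s^{n-1}\,dr\,ds\,d\xi\,d\eta,$$
so that the difference becomes a single integral with a transparent kernel:
$$\W(A) - W(M) = \int_{S^{n-1}}\!\int_{S^{n-1}}\!\int_0^\infty\!\int_0^\infty g(r\xi)\,g(s\eta)\,\bigl[\phi_\lambda(r\xi - s\eta) - \phi_\lambda(\xi - \eta)\bigr]\,r^{n-1}s^{n-1}\,dr\,ds\,d\xi\,d\eta.$$
The ring hypothesis \eqref{eq:ring} restricts the radial variables to $r, s \in [e^{-\eps}, e^\eps]$.

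The heart of the argument is the pointwise sandwich
$$\phi_\lambda(\xi - e^{2\eps}\eta)\ \le\ \phi_\lambda(r\xi - s\eta)\ \le\ e^{(n-\lambda)\eps}\,\phi_\lambda(\xi - \eta)$$
for $r, s \in [e^{-\eps}, e^\eps]$ and $\xi, \eta \in S^{n-1}$. Both ends come from optimising $|r\xi - s\eta|^2 = r^2 + s^2 - 2rs\,\xi\cdot\eta$ over the square. The upper bound on the middle term uses that the minimum of $|r\xi - s\eta|$ on $[e^{-\eps}, e^\eps]^2$ is attained at $r = s = e^{-\eps}$ and equals $e^{-\eps}|\xi - \eta|$, an elementary check on the four edges of the square. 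For the lower bound I would invoke the algebraic identity
$$|\xi - e^{2\eps}\eta|^2 - |r\xi - s\eta|^2 = (e^{2\eps} - 1)^2 - (r-s)^2 + 2(e^{2\eps} - rs)(1 - \xi\cdot\eta),$$
in which each of the three summands is nonnegative on the relevant domain, since $|r - s| \le e^\eps - e^{-\eps} < e^{2\eps} - 1$ for $\eps > 0$. Subtracting $\phi_\lambda(\xi - \eta)$ from both ends of the sandwich and using the monotonicity $\phi_\lambda(\xi - e^{2\eps}\eta) \le \phi_\lambda(\xi - \eta)$ (which follows from $|\xi - e^{2\eps}\eta| \ge |\xi - \eta|$) then yields
$$\bigl|\phi_\lambda(r\xi - s\eta) - \phi_\lambda(\xi - \eta)\bigr|\ \le\ \psi_{\lambda,\eps}(\xi, \eta).$$

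To finish I would set $N(\xi) := M_+(\xi) + M_-(\xi) \ge 0$, so that $\int_0^\infty |g(r\xi)|\,r^{n-1}\,dr = N(\xi)$. Combining the pointwise kernel bound with $|g\cdot g| = |g|\cdot|g|$ produces
$$|\W(A) - W(M)|\ \le\ \int_{S^{n-1}}\!\int_{S^{n-1}} N(\xi)\,\psi_{\lambda,\eps}(\xi,\eta)\,N(\eta)\,d\xi\,d\eta\ =\ \langle N,\, L_{\lambda,\eps}\,N\rangle_{L^2},$$
and Cauchy--Schwarz bounds this by $\|L_{\lambda,\eps}\|_{L^2 \to L^2}\,\|N\|^2$. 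The triangle inequality $\|N\| \le \|M_+\| + \|M_-\|$ then gives the claimed estimate, with room to spare on the stated factor $2$.

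The principal obstacle will be the pointwise kernel bound. The maximum of $|r\xi - s\eta|$ over $[e^{-\eps}, e^\eps]^2$ is attained at different corners depending on whether $\xi\cdot\eta$ lies above or below the threshold $(1+e^{-2\eps})/2$: in the close regime it is $e^{-\eps}|\xi - e^{2\eps}\eta|$ at $(e^{-\eps}, e^\eps)$, and in the far regime it is $e^\eps|\xi - \eta|$ at $(e^\eps, e^\eps)$. The asymmetric structure of $\psi_{\lambda,\eps}$---with the singular term $e^{(n-\lambda)\eps}\phi_\lambda(\xi - \eta)$ controlling the close regime and the regular term $\phi_\lambda(\xi - e^{2\eps}\eta)$ taming the far regime---is precisely what lets a single clean estimate cover both cases uniformly.
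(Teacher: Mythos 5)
Your proposal is correct and follows essentially the same route as the paper: express $\W(A)-W(M)$ in polar coordinates, establish the pointwise sandwich that bounds $|\phi_\lambda(r\xi-s\eta)-\phi_\lambda(\xi-\eta)|$ by $\psi_{\lambda,\eps}(\xi,\eta)$ for $r,s\in[e^{-\eps},e^\eps]$, and then conclude by Cauchy--Schwarz with the operator $L_{\lambda,\eps}$ (the paper splits into $M_\pm$ and bounds four pieces, you work directly with $N=M_++M_-$; the paper's upper bound uses the identity $|r\xi-s\eta|^2=(r-s)^2+rs|\xi-\eta|^2$ and its lower bound uses convexity of $t\mapsto t+t^{-1}$, whereas you use an equivalent algebraic decomposition, and you in fact state the sandwich inequality in the correct orientation, whereas the paper's Eq.~(5.3) as printed has the roles of the two endpoints swapped). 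One small wording slip: in your decomposition $|\xi-e^{2\eps}\eta|^2-|r\xi-s\eta|^2=(e^{2\eps}-1)^2-(r-s)^2+2(e^{2\eps}-rs)(1-\xi\cdot\eta)$, the summand $-(r-s)^2$ is not itself nonnegative; it is the grouped term $(e^{2\eps}-1)^2-(r-s)^2$ that is nonnegative, which your subsequent justification $|r-s|\le e^\eps-e^{-\eps}<e^{2\eps}-1$ in fact establishes.
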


\begin{proof} 
By decomposing $\Chi_{B^n}-\Chi_A$ into
its positive and negative parts and using Schwarz' inequality, 
it suffices to prove that
\begin{equation}
\label{eq:DL}
\begin{split}
D \ &:=\Bigl|\int_{A_i}\int_{A_j} \phi_\lambda(x-y)\, dxdy 
- \int_{S^{n-1}} \int_{S^{n-1}} M_i(\xi)M_j(\eta)\phi_\lambda(\xi-\eta)
\, d\xi d\eta\Bigr|\\
&\qquad \le \|L_{\lambda,\eps}\|_{L^2\to L^2} \,\|M_i\|\, \|M_j\|
\end{split}
\end{equation}
for $i,j\in\{+, -\}$, where $A_+=A\setminus B^n$, and 
$A_-=B^n\setminus A$.
In polar coordinates,
$$
D= \int_{S^{n-1}} \int_{S^{n-1}}
\left(
\int_0^\infty\!\!\int_0^\infty
\Chi_{A_i}(r\xi)\Chi_{A_j}(s\eta) 
\bigl(\phi_\lambda(r\xi\!-\!s\eta)-\phi_\lambda(\xi\!-\!\eta)\bigr)\,
(rs)^{n-1}drds\right)d\xi d\eta\,.
$$

\smallskip We claim that for $r,s\in [e^{-\eps}, e^\eps]$,
\begin{equation}
\label{eq:bounds-phi}
e^{-(n-\lambda)\eps}\phi_\lambda(\xi,\eta)\le\phi_\lambda(r\xi-s\eta)
\le e^{2(n-\lambda)\eps}\phi_\lambda(e^{-2\eps}\xi,\eta)\,.
\end{equation}
The upper bound holds since
$$ |r\xi-s\eta|^2 = (r-s)^2
+ rs |\xi-\eta|^2 \ge e^{-2\eps} |\xi-\eta|^2\,, 
$$ 
and the lower bound follows from
$$
|r\xi-s\eta|^2 = rs \left( \tfrac{r}{s}+ \tfrac{s}{r}
-2\xi\cdot\eta \right) 
\le e^{2\eps}\left( e^{2\eps}+e^{-2\eps}
-2\xi\cdot\eta\right) = e^{4\eps}
\left|e^{-2\eps}\xi-\eta\right|^2\,.
$$
In the middle step,
we have used the convexity of $t\mapsto t+ t^{-1}$ 
to replace $\frac r s $ with $e^{-2\eps}$.

Since $\phi_\lambda(\xi-\eta)$ lies between the upper
and lower bound, Eq.~\eqref{eq:bounds-phi} implies that
$$
\sup_{r,s\in [e^{-\eps}, e^\eps]}\ 
|\phi_\lambda(r\xi-s\eta)-\phi_\lambda(\xi-\eta)|\le 
\psi_{\lambda,\eps}(\xi,\eta)\,.
$$ 
Therefore, for each $\xi,\eta\in S^{n-1}$,
the inner integral satisfies
$$
\left|
\int_0^\infty\!\!\int_0^\infty
\Chi_{A_i}(r\xi)\Chi_{A_j}(s\eta) 
\bigl(\phi_\lambda(r\xi\!-\!s\eta)-\phi_\lambda(\xi\!-\!\eta)\bigr)\,
(rs)^{n-1}drds\right| 
\le M_i(\xi)M_j(\eta) \psi_{\lambda,\eps}(\xi,\eta)\,.
$$
Integration over the spherical variables yields Eq.~\eqref{eq:DL}.
\end{proof}

The remainder of the section is dedicated to 
computing $W(M)$ and bounding the norm of $L_{\lambda,\eps}$.
Since $W$ is rotation-invariant, we 
will expand $M$ in spherical harmonics.  We briefly
summarize the pertinent facts about spherical harmonics,
following the conventions in~\cite{AAR-1999}.

By definition, a {\em spherical harmonic}
is the restriction of a harmonic homogeneous polynomial
to the unit sphere. Explicitly,
$Y_k$ is a spherical harmonic of degree $k\ge 0$
on $S^{n-1}$, if and only if the 
function $P_k(x) = |x|^kY_k\bigl(\frac{x}{|x|}\bigr)$
is a harmonic homogeneous polynomial of degree $k$
on $\RR^n$.
%The spherical harmonics are the eigenfunctions
%of the Laplace-Beltrami operator on 
%$S^{n-1}$, with eigenvalues $k(n+k-2)$.

The spherical harmonics simultaneously diagonalize all 
rotation-invariant linear operators on $L^2(S^{n-1})$.
We first consider the functional $W$.
By the Funk-Hecke formula,
$$
\int_{S^{n-1}} Y_k(\eta)\phi_\lambda(\xi-\eta) \, d\eta = 
\beta_k Y_k(\xi)\,,\quad \xi\in S^{n-1}
$$
for every spherical harmonic $Y_k$ of degree $k$ on
on $S^{n-1}$,
with multipliers $\beta_k$ that depend only
on $k$, $n$, and $\lambda$. 
In terms of these multipliers,
\begin{equation}
\label{eq:W-beta}
W(M)=\sum_{k\ge 0} \beta_k \|Y_k\|^2\,,\qquad M\in L^2(S^{n-1})\,,
\end{equation}
where $M=\sum_{k\ge 0} Y_k$ is the expansion of
$M$ in spherical harmonics.

Let us sketch the computation of the Funk-Hecke multipliers.
Since the spherical harmonics of degree
$k=0$ are the constant functions,
\begin{equation}
\label{eq:FH0}
\beta_0=\frac{1}{c_\lambda}
\int_{S^{n-1}} |e_n-\eta|^{-n+\lambda}\, d\eta>0\,.
\end{equation}
For $k\ge 1$ in dimension $n\ge 3$, we apply the Funk-Hecke formula
to the zonal harmonic 
$Z_k(\eta) = C_k(e_n\cdot\eta)$, where $C_k$
is a Gegenbauer polynomial of order $\nu=\frac{n-2}{2}$, 
and evaluate the integral at $\xi=e_n$.
By definition, $C_k(t)$ is a polynomial of
degree $k$, given by the $k$-th Taylor coefficient
of $(1+r^2-2rt)^{\frac{n-2}{2}}$ at $r=0$. In particular,
$Z_k(e_n)= C_k(1) = \frac{\Gamma(n-2+k)} {k!\, \Gamma(n-2)}$.
Note that $C_k$ is even when $k$ is even, and odd otherwise.
We compute 
\begin{align*}
\beta_k &= \frac{1}{c_\lambda Z_k(e_n)}
\int_{S^{n-1}} Z_k(\eta) |e_n-\eta|^{-n+\lambda} \,d\eta\\
&= \frac {|S^{n-2}| }{C_k(1)c_\lambda} 
\int_0^\pi C_k(\cos\theta) \, (2-2\cos\theta)^{-\frac{(n-\lambda)}{2}}
(\sin\theta)^{n-2}\, d\theta\\
&= \frac{|S^{n-2}|} 
{c_\lambda 2^{\frac{n-\lambda}{2}}C_k(1) }
\, \int_{-1}^1 C_k(-t) (1+t)^{\frac{\lambda-3}{2}} (1-t)^{\frac{n-3}{2}} 
\, dt\\
&= (-1)^k \frac{c_{n,\lambda}}
{\Gamma(\frac{\lambda-n+2}{2}-k )
\Gamma(\frac{\lambda+n-2}{2}+k)}
\end{align*}
for some constant $c_{n,\lambda}>0$ that does
not depend on $k$. 
Here, $\theta$ is the angle from the north pole,
$t=-\cos\theta$, and we have expanded
$\sin^2 \theta = (1-t)(1+t)$.
In the last line, we have turned to the 
tables of Erdelyi et al~\cite{EMOT-1954}
and applied Eq. (3) on  p.~280
(with parameter values $n=k$, $\nu=\frac{n-2}{2}$,
$\beta=\frac{\lambda-3}{2}$).
The functional equation for Gamma yields the recursion relation
\begin{equation}
\label{eq:FH}
\beta_{k+1} = \frac{n-\lambda+2k}
{n+\lambda+2k-2}\,\beta_k\,, \qquad k\ge 0, n\ge 3\,.
\end{equation}
Clearly, the sequence $(\beta_k)$ is positive, 
decreasing, and converges to zero. 
The order of decay is $O(k^{-(\lambda-1)})$.

In dimension $n=2$, the zonal harmonics
are given by 
$Z_k=\cos(k\theta)=T_k(\cos\theta)$, 
where $T_k$ is a Chebyshev polynomial.
Here, we use 
Eq. (1) on  p.~271 (with parameter values
$n=k$, $a=\frac{\lambda-3}{2}$) to derive 
the slightly different recursion relation
$$
\beta_{k+1} = \frac{(2k+2)(2k+2 -\lambda)}
{(2k+1)(2k+\lambda)}\,\beta_k\,,\qquad k\ge 0, n=2\,.
$$
The sequence is decreasing for $k\ge 1$; if $\lambda>\frac43$ then
also $\beta_1<\beta_0$.

\begin{prop} [Bound on $W$]
\label{lem:toy}
Let $\lambda\in (1,n)$.
If $F$ is a square-integrable function on
$S^{n-1}$ that satisfies
$$
\int_{S^{n-1}} F(\xi)\, d\xi=0\,,\qquad
\int_{S^{n-1}} \xi F(\xi)\, d\xi =0\,,
$$
then $W(F)\le \beta_2\|F\|^2$.
\end{prop}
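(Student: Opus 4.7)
The plan is to expand $F$ in spherical harmonics and use the Funk--Hecke diagonalization of $W$ established in Eq.~\eqref{eq:W-beta}, together with the monotonicity of the multiplier sequence $(\beta_k)_{k\ge 0}$.

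First I would write $F = \sum_{k \ge 0} Y_k$, where $Y_k$ is the orthogonal projection of $F$ onto the space of spherical harmonics of degree $k$. The two integral constraints translate directly into conditions on the low-degree components: the spherical harmonics of degree $0$ are precisely the constant functions on $S^{n-1}$, so $\int_{S^{n-1}} F\, d\xi = 0$ forces $Y_0 = 0$; the spherical harmonics of degree $1$ are exactly the restrictions of linear functions $\xi \mapsto a\cdot\xi$, spanned by the coordinate functions $\xi \mapsto \xi_i$, so $\int_{S^{n-1}} \xi F(\xi)\, d\xi = 0$ forces $Y_1 = 0$.

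Next I would apply Eq.~\eqref{eq:W-beta} to obtain
\begin{equation*}
W(F) \;=\; \sum_{k \ge 2} \beta_k \, \|Y_k\|^2\,.
\end{equation*}
Since $\lambda > 1$, the recursion~\eqref{eq:FH} gives $\beta_{k+1}/\beta_k = (n-\lambda+2k)/(n+\lambda+2k-2) < 1$ for all $k\ge 0$ in dimension $n\ge 3$, so $(\beta_k)$ is strictly decreasing; in the two-dimensional case, the remark after~\eqref{eq:FH} likewise gives monotonicity of $(\beta_k)_{k \ge 1}$, hence in particular $\beta_k \le \beta_2$ for all $k \ge 2$. Substituting this bound and invoking Parseval gives
\begin{equation*}
W(F) \;\le\; \beta_2 \sum_{k \ge 2} \|Y_k\|^2 \;=\; \beta_2 \, \|F\|^2\,,
\end{equation*}
as claimed.

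There is essentially no obstacle here; the only point requiring a moment of care is to verify that the degree-one condition from Eq.~\eqref{eq:constraints} really does exhaust the space of degree-one harmonics, i.e.\ that the coordinate functions form a basis of $\ker(\Delta_{S^{n-1}} + (n-1))$ restricted to first-degree homogeneous polynomials. This is standard and follows from the definition of spherical harmonics recalled in the excerpt. The strict inequality $\beta_1 > \beta_2$ used elsewhere in the paper is consistent with, and in fact a special case of, the monotonicity invoked above.
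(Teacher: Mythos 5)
Your proof is correct and follows exactly the same route as the paper's: expand $F$ in spherical harmonics, use the two constraints to kill the degree-$0$ and degree-$1$ components, invoke Eq.~\eqref{eq:W-beta}, and bound $\beta_k\le\beta_2$ for $k\ge 2$ via the monotonicity from Eq.~\eqref{eq:FH}. Your extra care in separately confirming monotonicity for $n=2$ (where the paper only asserts decrease for $k\ge 1$) is a worthwhile addition, but the argument is otherwise identical.
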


\begin{proof} 
Expand in spherical harmonics $F=\sum_{k\ge 0} Y_k$, 
and apply the Funk-Hecke formula\,,
$$
W(F) \ = \ \sum_{k\ge 2} \beta_k \|Y_k\|^2 
\ \le \ \beta_2 \sum_{k\ge 2} \|Y_k\|^2 
\ =\  \beta_2 \|F\|^2\,.
$$
Here, the first and last identities hold
since $Y_0=Y_1=0$ by assumption, and the middle
inequality follows since $\beta_k<\beta_2$ for all $k>2$.
%The other claims are proved in the same manner.
\end{proof}

To bound the norm of $L_{\lambda,\eps}$, we also
need estimates on the multipliers $b_k(r)$ associated with
the kernels $\phi_\lambda(r\xi-\eta)$.  
The multipliers are defined by the property that
\begin{equation}
\label{eq:FH-r}
\int_{S^{n-1}} Y_k(\eta)\phi_\lambda(r\xi-\eta)\, d\eta = 
b_k(r)Y_k(\xi)\  \qquad \xi\in S^{n-1}
\end{equation}
for every spherical harmonic $Y_k$. 
We note in passing that, for any pair of spheres of different
radii $0<r\le s$, the multipliers are given by
$s^{-(n-2)}b_k\bigl(\frac{r}{s}\bigr)$.
When $\lambda=2$, the multipliers are easily computed:

\begin{lem} [Funk-Hecke multipliers for the Newton potential]
\label{lem:FH-N}
For $n\ge 3$, $\lambda=2$,
$$
b_k(r) = \frac{r^k}{n+k-2} \,,\qquad k\ge 0, r\in [0,1]\,.
$$
\end{lem}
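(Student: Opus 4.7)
The plan is to exploit that, for $\lambda=2$, the kernel $\phi_2$ is (up to the constant $c_2=(n-2)|S^{n-1}|$) a fundamental solution of $-\Delta$ on $\RR^n$. Consequently the single-layer potential
$$V_k(x) \ := \ \int_{S^{n-1}} \phi_2(x-\eta)\, Y_k(\eta)\,d\eta$$
is harmonic on $\RR^n\setminus S^{n-1}$, and by the defining property of $b_k$ we have $V_k(r\xi)=b_k(r)\,Y_k(\xi)$. Computing $b_k(r)$ is therefore a harmonic-extension problem on the unit ball with boundary data a pure spherical harmonic.

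On the open unit ball I would apply separation of variables: the unique harmonic function whose trace on $S^{n-1}$ is $Y_k$ is the solid harmonic $|x|^k\,Y_k(x/|x|)$. Hence $V_k(r\xi)=A_k\, r^k\, Y_k(\xi)$ for $r\in[0,1]$, and, symmetrically, decay at infinity (or a Kelvin transform) forces $V_k(r\xi)=B_k\, r^{-(n-2+k)} Y_k(\xi)$ for $r\ge 1$. Continuity of the single-layer potential across $S^{n-1}$ gives $A_k=B_k$, so $b_k(r)=A_k\, r^k$ and the entire problem reduces to identifying one scalar.

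To fix $A_k$ I would use the jump relation for the radial derivative of a single-layer potential with fundamental-solution kernel of $-\Delta$, namely $\partial_r V_k|_{r=1^+}-\partial_r V_k|_{r=1^-}=-Y_k$. Differentiating the interior and exterior formulas gives a single linear equation for $A_k$. Equivalently, one can read off $A_k=b_k(1)=\beta_k$ from the $\lambda=2$ specialization of the Funk--Hecke recursion of the previous section, $\beta_{k+1}=\frac{n-2+2k}{n+2k}\beta_k$, combined with the base case $\beta_0=1/(n-2)$ (a consequence of the shell theorem / mean-value property for the harmonic function $y\mapsto|y-\eta|^{-(n-2)}$ on the ball); the recursion telescopes at once.

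There is no genuine obstacle, only bookkeeping of constants: one must match the normalization of $\phi_2$ to the correct sign in the jump relation (it is $-f$, since $-\Delta\phi_2=\delta_0$), and verify that the coefficient produced by the jump relation agrees with the specialization of the general multiplier recursion at $\lambda=2$. No ideas beyond classical potential theory on the ball are required.
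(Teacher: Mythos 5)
Your approach is sound and closely related to the paper's, but more self-contained. The paper's proof uses only the \emph{interior} harmonic extension: the single-layer potential $u(x)=\int_{S^{n-1}}Y_k(\eta)\phi_2(x-\eta)\,d\eta$ is harmonic on $B^n$ with boundary trace $\beta_k Y_k$, so $u(r\xi)=\beta_k r^k Y_k(\xi)$, and the scalar $\beta_k$ is imported from the previously computed Funk--Hecke recursion~\eqref{eq:FH} and base case~\eqref{eq:FH0}. You additionally bring in the exterior harmonic extension $B_k r^{-(n-2+k)}Y_k$, continuity of the single-layer potential across $S^{n-1}$, and the normal-derivative jump relation. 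That buys independence from~\eqref{eq:FH}: the jump relation $\partial_r V_k|_{1^+}-\partial_r V_k|_{1^-}=-Y_k$ immediately gives $-(n-2+k)A_k-kA_k=-1$, i.e.\ $A_k(n+2k-2)=1$, with no need for the Gegenbauer/Gamma-function computation.

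One thing you should notice if you carry out the "bookkeeping of constants" you mention: the scalar produced by the jump relation is $A_k=\tfrac{1}{n+2k-2}$, and telescoping the recursion you quote, $\beta_{k+1}=\tfrac{n-2+2k}{n+2k}\beta_k$ with $\beta_0=\tfrac{1}{n-2}$, gives $\beta_1=\tfrac1n$, $\beta_2=\tfrac{1}{n+2}$, and in general $\beta_k=\tfrac{1}{n+2k-2}$. This agrees between your two methods, but it does \emph{not} agree with the displayed formula $b_k(r)=\tfrac{r^k}{n+k-2}$ in the lemma (nor with the line "$\beta_k=\tfrac{1}{n+k-2}$" in the paper's proof); the correct statement is $b_k(r)=\tfrac{r^k}{n+2k-2}$. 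A spot-check at $n=3$, $k=1$ confirms this: direct evaluation of the Funk--Hecke integral gives $\beta_1=\tfrac13$, matching $\tfrac{1}{n+2k-2}$, not $\tfrac12$. So your argument is correct, but you should record the corrected formula rather than reproduce the lemma as stated. (The discrepancy does not propagate: in the proof of Proposition~\ref{prop:W} for the Newton potential, the supremum over $k$ is still attained at $k=0$, and the resulting $O(\eps)$ bound is unaffected.)
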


\begin{proof} 
At $r=1$, Eq.~\eqref{eq:FH0} and the recursion 
in Eq.~\eqref{eq:FH}
yield $\beta_k= \frac1{n+k-2}$.

Let $Y_k$ be a spherical harmonic of order $k\ge 0$.
Since $\phi_2$ is the fundamental solution of the Laplacian,
the function
$$
u(x) := b_k(|x|)Y_k\bigl(\tfrac{x}{|x|}\bigr)
=\int_{S^{n-1}} Y_k(\eta)\phi_2(x-\eta)\, d\eta\,,
\quad (x\in B^n)\, 
$$
is harmonic on the unit ball, with boundary values
on $S^{n-1}$ given by $\beta_kY_k$.
But the unique harmonic extension of 
the spherical harmonic $\beta_kY_k$ to the unit
ball is the homogeneous harmonic polynomial of degree $k$ 
that defines it. Therefore $u(r\xi)=\beta_k r^k Y_k(\xi)$,
proving the claim.
\end{proof}

As a consequence, for $\lambda=2$ 
Proposition~\ref{prop:W} holds with an explicit 
error estimate of order~$O(\eps)$:

\begin{proof}[Proof of Proposition~\ref{prop:W} for the 
Newton potential] Let $n\ge 3$ and $\lambda=2$. 
By Lemma~\ref{lem:W} and Schwarz' inequality,
$$| \W(A)-W(M)|\le 2 \|L_{2,\eps}\|_{L^2\to L^2} 
\bigl(\|M_+\|^2+\|M_-\|^2\bigr)\,.
$$
By the definition of $L_{2,\eps}$
and Lemma~\ref{lem:FH-N},
the operator is represented by the  multipliers
$$
e^{(n-2)\eps}\beta_k - e^{-2(n-2)\eps}b_k(e^{-2\eps})
= \frac{e^{(n-2)\eps}-e^{-2(n+k-2)\eps}}{n+k-2}\,,
\qquad k\ge 0\,.
$$
Its operator norm is the norm of the sequence of
multipliers in $\ell^\infty$,
$$
\|L_\eps\|_{L^2\to L^2}
=\sup_{k\ge 0} 
\frac{e^{(n-2)\eps}-e^{-2(n+k-2)\eps}}{n+k-2}
% = \left\{ \frac{e^{(n-2)\eps}-1}{n+k-2} + 
%\frac{1-e^{-2(n+k-2)\eps}}{n+k-2}\right\}\,.
= \frac{e^{(n-2)\eps}-e^{-2(n-2)\eps}}{n-2}\,.
$$
In particular, $\|L_\eps\|_{L^2\to L^2}=O(\eps)$ as $\eps\to 0$.
\end{proof}

\bigskip For $\lambda\ne 2$, we have the following
estimate.

\begin{lem}[Funk-Hecke multipliers for radius $r<1$]
\label{lem:FH-r}
Let $\lambda\in (1,n)$. For $k\ge 0$, define 
$b_k(r)$ by Eq.~\eqref{eq:FH-r}.
Then $b_k$ is continuous on $[0,1]$, and
$$
b_k(r)\le \left(\frac{2}{1+r^2}\right)^{\frac{n-\lambda}{2}}
\beta_k\,,\qquad k\ge 0\,,\ r\in [0,1]\,,
$$
where $\beta_k$ is as in Eqs.~\eqref{eq:FH0} and~\eqref{eq:FH}.
Equality holds for $r=1$.
\end{lem}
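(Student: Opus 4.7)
The plan is to prove the stronger statement that the normalized quantity
$g(r) := ((1+r^{2})/2)^{\nu}\,b_{k}(r)$, with $\nu := (n-\lambda)/2$, is non-decreasing
on $[0,1]$ and attains $\beta_{k}$ at $r=1$; the asserted bound and the
equality case then follow at once. Continuity of $b_{k}$ on $[0,1]$ is a
dominated-convergence argument: for $r\in[1/2,1]$, the elementary estimate
$1+r^{2}-2rt\ge 2r(1-t)\ge 1-t$ produces the integrable dominator
$(1-t)^{-\nu}(1-t^{2})^{(n-3)/2}$, whose integrability on $[-1,1]$ is exactly
the hypothesis $\lambda>1$.

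Applying the Funk-Hecke formula to the zonal harmonic, as in the derivation
of $\beta_{k}$ earlier in the paper, one writes
\[
b_{k}(r) \ = \ \frac{|S^{n-2}|}{c_{\lambda}\,C_{k}^{(n-2)/2}(1)}\int_{-1}^{1}C_{k}^{(n-2)/2}(t)\,(1+r^{2}-2rt)^{-\nu}\,(1-t^{2})^{(n-3)/2}\,dt.
\]
The crucial algebraic identity
$\bigl((1+r^{2})/2\bigr)^{\nu}(1+r^{2}-2rt)^{-\nu} = (2-2st)^{-\nu}$, with
$s := 2r/(1+r^{2})$, rewrites $g(r) = G(s)$, where
\[
G(s)\ :=\ \frac{|S^{n-2}|}{c_{\lambda}\,C_{k}^{(n-2)/2}(1)}\int_{-1}^{1}C_{k}^{(n-2)/2}(t)\,(2-2st)^{-\nu}\,(1-t^{2})^{(n-3)/2}\,dt.
\]
Since $r \mapsto s$ is a continuous bijection of $[0,1]$ onto itself with $s(1)=1$ and $G(1)=\beta_{k}$, the monotonicity of $g$ on $[0,1]$ reduces to the monotonicity of $G$ on $[0,1]$.

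To establish the latter, I would binomially expand
$(2-2st)^{-\nu} = 2^{-\nu}\sum_{m\ge 0}((\nu)_{m}/m!)(st)^{m}$
(uniformly in $t\in[-1,1]$ for each fixed $s<1$) and interchange sum and
integral, yielding
\[
G(s)\ \propto\ \sum_{m\ge 0}\frac{(\nu)_{m}}{m!}\,I_{m}\,s^{m},\qquad
I_{m}:=\int_{-1}^{1}t^{m}\,C_{k}^{(n-2)/2}(t)\,(1-t^{2})^{(n-3)/2}\,dt.
\]
Orthogonality of Gegenbauer polynomials under the weight
$(1-t^{2})^{(n-3)/2}$ forces $I_{m}=0$ when $m<k$ or $m\not\equiv k\pmod{2}$;
for the remaining indices, the Rodrigues formula for $C_{k}^{(n-2)/2}$
followed by $k$ integrations by parts (boundary terms vanish because
$n\ge 3$) reduces $I_{m}$ to a strictly positive multiple of the Beta
integral $\int_{-1}^{1}t^{m-k}(1-t^{2})^{k+(n-3)/2}\,dt$. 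Together with
$(\nu)_{m}>0$, this shows that $G$ has non-negative power-series
coefficients, so $G$ is non-decreasing on $[0,1)$; monotone convergence at
$s=1$ upgrades this to $G(s)\le G(1)=\beta_{k}$ for all $s\in[0,1]$. The
main conceptual step is identifying the change of variables
$s=2r/(1+r^{2})$ that absorbs all of the $r$-dependence of the kernel into a
single scalar; once this is in place, the rest is a routine check using
Rodrigues' formula and Beta integrals.
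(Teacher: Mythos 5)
Your argument is essentially the same as the paper's: you normalize $b_k$ by $((1+r^2)/2)^{\nu}$, make the substitution $s=2r/(1+r^2)$ to absorb the $r$-dependence, binomially expand the kernel, and conclude monotonicity from term-by-term positivity of the coefficients, with Gegenbauer orthogonality killing the low-order moments. The one place you diverge is how you establish positivity of the surviving moments $I_m$: you use Rodrigues' formula plus $k$ integrations by parts, whereas the paper simply cites the tabulated Gegenbauer moment formula in Erd\'elyi et al. Your version is more self-contained, but it silently excludes $n=2$ (the Gegenbauer parameter $(n-2)/2$ degenerates to $0$, and Rodrigues' formula in this form does not apply), whereas the lemma is stated for $\lambda\in(1,n)$, hence includes $n=2$; the paper patches this by writing the zonal harmonics as Chebyshev polynomials, treating $\eta\in S^1$ as a complex variable, and evaluating the moments via Cauchy's formula. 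If you add that case, your proof is complete.
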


\begin{proof} Fix $\lambda\in (1,n)$ and $k\ge 0$.
We evaluate Eq.~\eqref{eq:FH-r}
with $Y_k=Z_k$ (the zonal harmonic) at $\xi=e_n$,
\begin{align}
\label{eq:bk-r-proof}
b_k(r)&=
\frac{1}{c_\lambda Z_k(e_n)}
\int_{S^{n-1}}
Z_k(\eta) |re_n-\eta|^{-(n-\lambda)}\, d\eta\,.
\end{align}
For $\eta\ne e_n$, the Riesz potential is bounded by
\begin{align*}
\phi_\lambda(re_n-\eta) = 
\frac{1}{r^{\frac{n-\lambda}{2}}c_\lambda}
\left(\frac{1+r^2}{r} -2e_n\cdot \eta\right)^{-\frac{n-\lambda}{2}}
&\le r^{-\frac{n-\lambda}{2}}\phi_\lambda(e_n-\eta)\,.
\end{align*}
Since each $Z_k$ is bounded on $S^{n-1}$,
the Dominated Convergence Theorem
implies that $b_k(r)$ is continuous on $[0,1]$,
and that $b_k(1)=\beta_k$.

%It remains to prove the upper bound on $b_k(r)$.
%For each $\eta\in S^{n-1}$,
%the function $|re_n-\eta|^{-\frac{n-\lambda}{2}}$
%has a Taylor expansion about $r=0$ that converges
%on $(-1,1)$. Its coefficients
%are the Gegenbauer polynomials
%of type $\frac{n-\lambda}{2}$, evaluated at $e_n\cdot\eta$.
%When $\lambda=2$, these agree with the zonal spherical harmonics,
%and one can  compute directly
%that $b_k(r)= r^k \beta_k$, see Lemma~\ref{lem:FH-r-2}. 

%For $\lambda\ne 2$ the situation is more complicated.
%The coefficients of the Gegenbauer polynomials
%alternate in sign, making them poorly
%suited for obtaining estimates. Instead, we write
To obtain the upper bound on $b_k(r)$,
we use the binomial expansion,
\begin{equation}
\label{eq:eq:expand-phi}
(1+r^2)^{{\frac{n-\lambda}{2}}}|re_n -\eta|^{-(n-\lambda)}
= \left( 1-\tfrac{2r}{1+r^2} e_n\cdot\eta\right)^{-\frac{n-\lambda}{2}}
=\sum_{\ell=0}^\infty a_\ell t^\ell\,,
\end{equation}
where $t=\frac{2r}{1+r^2}e_n\cdot \eta$.
Since the exponent is negative,
the coefficients in the binomial series are positive.
We then integrate the series against the zonal
harmonic $Z_k$. Note that  
$$
\int_{ S^{n-1}} Z_k(\eta)(e_n\cdot\eta)^\ell \, d\eta=0\,,
\qquad \text{if}\ \ell<k\ \text{or}\ \ell-k\ \text{is odd}\,,
$$
because $Z_k$ is orthogonal to all polynomials
of order less than $k$ and contains only monomials of the
same parity as $k$. 
For $\ell=k+2j$, the integrals can be evaluated
exactly in terms of Gamma functions.
In dimension
$n \geq 3$, we apply Eq. (2) on p. 280 of~\cite{EMOT-1954}; in dimension
$n=2$ we interpret $\eta\in S^1$ as a complex variable,
write $Z_k(\eta)={\rm Re}\, \eta^k$ and
$\eta\cdot e_2=\frac12(\eta+\eta^{-1})$, and apply Cauchy's formula.
In either case,
$$
d_{k,j}:=\int_{S^{n-1}} Z_k(\eta)(e_n\cdot \eta)^{k+2j} \, d\eta >0\,.
$$

Since all coefficients in the series
$$
(1+r^2)^{\frac{n-\lambda}{2}}
b_k(r)= \frac{1}{ c_\lambda Z_k(e_n)}
\sum_{j=0}^\infty a_{k+2j}d_{k,j} \left(\frac{2r}{1+r^2}\right)^{k+2j} 
$$
are positive, it defines an increasing function of $r$ on $[0,1]$.  
The proof is completed by comparing with the value
at $r=1$.
\end{proof}

The proof of Lemma~\ref{lem:FH-r}
shows that $b_k(r)=O(r^k)$ as $r\to 0$.
We suspect that $b_k$ itself may be increasing on $[0,1]$
but could not find a reference.

\begin{proof}[Proof of Proposition~\ref{prop:W}]
Let $n\ge 2$ and fix $\lambda\in (1,n)$.
By Lemma~\ref{lem:W} and Schwarz' inequality,
$$
|\W(A)-W(M)| \le
2\|L_{\lambda,\eps}\|_{L^2\to L^2} \,\bigl(\|M_+ \|^2+ \|M_-\|^2)\,.
$$
By definition, the operator $L_{\lambda,\eps}$
is represented by the multipliers
$ e^{(n-\lambda)\eps}\beta_k - e^{-2(n-\lambda)\eps}b_k(e^{-2\eps})>0$.
Its norm is bounded by
$$
\|L_{\lambda,\eps}\|_{L^2\to L^2}
=\sup_{k\ge 0}
\left\{
e^{(n-\lambda)\eps}\beta_k - e^{-2(n-\lambda)\eps}b_k(e^{-2\eps})\right\}\,.
$$
By Lemma~\ref{lem:FH-r},
$$
e^{-2(n-\lambda)\eps}b_k(e^{-2\eps})
\le \left(\frac{2e^{-2(n-\lambda)\eps}}
{1+e^{-4\eps}}\right)^\frac{n-\lambda}{2}
\beta_k \le \beta_k\,.
$$
In particular, the multipliers are positive.
Since $\beta_k$ is decreasing, we have for any $K>0$
$$
\|L_{\lambda,\eps}\|
\le \max
\left\{\max_{k<K} (b_k(r)-\beta_k),e^{(n-\lambda)\eps}\beta_K\right\}\,.
$$
Since $\lim_{\eps\to 0}b_k=\beta_k$ for each $k$ by Lemma~\ref{lem:FH-r}, 
it follows that
$$
\limsup_{\eps\to 0} \|L_{\lambda,\eps}\| \le \beta_K\,.
$$
We finally take $K\to\infty$ and recall that $\lim \beta_k=0$.
\end{proof}

%%%%%%%%%%%%%%%%%%%%%%%%%%%%%%%%%%%%%%%%%%%%%%%%%%%%%%%%%%%%%%%%%%%%
\section{Geometric reduction}
\label{sec:reduce}

In this section, we reduce the proof of
Theorem~\ref{thm:main} to sets
that are squeezed between two balls,
as in Eq.~\eqref{eq:ring}, and satisfy
the constraints in Eq.~\eqref{eq:constraints}.
In the estimates, we use the notation 
$a\lesssim b$ (and equivalently $b\gtrsim a$)
to signify that $a\le  Cb$ for some constant $C$
that depends only on $n$ and $\lambda$.
We say that a subset $A\subset \RR^n$ is {\em scaled}
if $|A|=|B^n|$, or equivalently, $A^*=B^n$.
It is {\em centered} if $\alpha(A)=|A\Delta A^*|$.

\begin{prop}[Auxiliary properties]
\label{prop:reduce}
For every subset $A\subset\RR^n$ of finite
positive volume there exists a scaled subset $\tilde A \subset \RR^n$ 
and $\eps\lesssim(\alpha(A))^{\frac{\lambda}{n}}$
such that
\begin{align}
\tag{P1} &\delta(\tilde A) \le \delta(A)\,,\\
\tag{P2} &\alpha(\tilde A) = \alpha(A)\,,\\
\tag{P3} &e^{-\eps} B^n\subset \tilde A \subset e^\eps B^n\,,\\
\tag{P4} &\int_{\tilde A} \frac{x}{|x|}\, dx=0\,.
\end{align}
\end{prop}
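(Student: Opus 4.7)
The plan is to construct $\tilde A$ from $A$ through three stages: a normalization by scaling and translation, a volume-preserving rearrangement that squeezes $A$ into an annulus around $S^{n-1}$, and a final small translation to satisfy (P4). Each stage will preserve $\alpha(A)$ exactly and not increase $\delta(A)$.

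First I would rescale $A$ by $(|B^n|/|A|)^{1/n}$ so that $|A|=|B^n|$, and translate so that the infimum in the Fraenkel asymmetry is attained at the origin, yielding $|A\Delta B^n|=\alpha(A)$ and $|A\setminus B^n|=|B^n\setminus A|=\alpha(A)/2$. Both operations leave $\delta$ and $\alpha$ invariant, so we may henceforth assume $A$ satisfies these normalizations.

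For the squeeze, set $\eps:=C\alpha(A)^{\lambda/n}$ with a large constant $C=C(n,\lambda)$ to be chosen. Since $\lambda<n$ and (after reducing to small asymmetry) $\alpha(A)\leq 1$, we have $\eps\geq C\alpha(A)$, so the annuli $\{1<|x|<e^\eps\}$ and $\{e^{-\eps}<|x|<1\}$ each have Lebesgue measure of order $\eps|B^n|$, sufficient to absorb any mass of volume $\alpha(A)/2$. I would then modify $A$ in three substeps: (a) move $A\setminus e^\eps B^n$ into empty slots of $\{1<|x|<e^\eps\}$ disjoint from $A\cap B^n$; (b) fill the inner hole $e^{-\eps}B^n\setminus A$ with mass taken from $A\cap\{1<|x|<e^\eps\}$; (c) to preserve $|A\cap B^n|$, move an equal volume from $A\cap\{e^{-\eps}<|x|<1\}$ into the slots of $\{1<|x|<e^\eps\}$ freed by substep (b). Since the net flux across $\partial B^n$ is zero, $|A\cap B^n|$ and $|A\Delta B^n|$ are preserved, verifying (P2). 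Substeps (a) and (b) move mass strictly inward and strictly increase $\E_\lambda$ by the radial monotonicity of $\phi_\lambda$; substep (c) is outward but of radial extent at most $O(\eps)$, and should be dominated by (b), yielding a nonnegative net change in $\E_\lambda$ and hence (P1).

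For (P4), apply a translation $v$ determined implicitly by $\int_{\tilde A_0-v}\frac{x}{|x|}\,dx=0$. The derivative of this map in $v$ at $v=0$ is $-\int_{\tilde A_0}\frac{1}{|x|}\bigl(I-\frac{x\otimes x}{|x|^2}\bigr)\,dx$, which for $\tilde A_0$ close to $B^n$ is close to $-|B^n|I$ (by a direct computation using $\int_{S^{n-1}}\xi\otimes\xi\,d\xi=\frac{|S^{n-1}|}{n}I$) and is therefore invertible. The implicit function theorem produces $v$ with $|v|\lesssim\bigl|\int_{\tilde A_0}\frac{x}{|x|}\,dx\bigr|\lesssim|\tilde A_0\Delta B^n|=\alpha(A)\lesssim\eps$, so $\tilde A:=\tilde A_0-v$ satisfies all four properties with $\eps$ enlarged by at most $|v|$, still $\lesssim\alpha(A)^{\lambda/n}$.

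The main obstacle will be verifying (P1) rigorously in substep (c): the outward displacement there could in principle reduce $\E_\lambda$, so one must argue that the inward displacement in (b) --- moving mass from $\{1<|x|<e^\eps\}$ down to $\{|x|<e^{-\eps}\}$ --- dominates the comparatively short outward displacement within the narrow shell $\{e^{-\eps}<|x|<e^\eps\}$ of width $O(\eps)$ performed by (c). A careful quantitative comparison using the explicit form of $\phi_\lambda$ near the unit sphere should suffice; alternatively, one could iterate the swap with successively smaller pieces, or replace (c) by a more symmetric radial-Steiner-type rearrangement that is automatically energy-nondecreasing. One must also confirm that after the squeeze the Fraenkel-centering from the first stage is still attained at the origin, so that (P2) holds as an exact equality rather than merely an inequality.
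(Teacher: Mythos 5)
Your overall plan (scale and center, squeeze into a thin shell with $\eps\sim\alpha^{\lambda/n}$, then re-translate) matches the paper's, and your implicit-function argument for (P4) corresponds to the paper's Lemma~\ref{lem:median}. However, the squeeze step has two gaps, both flagged by you but not closed. The first concerns (P1). Viewing (b) and (c) as separate inward and outward moves misleads you: the slots in $\{1<|x|<e^\eps\}$ emptied in (b) are refilled in (c), so the net effect of (b)$+$(c) is a single inward move of some $S_c\subset A\cap\{e^{-\eps}<|x|<1\}$ into $e^{-\eps}B^n\setminus A$. The genuine issue is then that Lemma~\ref{lem:reduce-V} yields a first-variation gain proportional to the radial gap between where the mass leaves and where it lands; if $S_c$ were located near radius $e^{-\eps}$, this gap would vanish. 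The paper's crucial choice is to take $S_c$ (and likewise the outer landing zone in your substep (a)) from a \emph{thin} annulus of width $\lesssim\alpha$ adjacent to $S^{n-1}$: since the mass involved has volume $\le\alpha/2$, such an annulus suffices, and because $\alpha\ll\eps=C\alpha^{\lambda/n}$ for small $\alpha$, the resulting radial gap is $\gtrsim\eps$, which by Lemmas~\ref{lem:reduce-V} and~\ref{lem:reduce-W} dominates the $O(\alpha^{\lambda/n})$ change in $\W$ once $C$ is chosen large. Without specifying that $S_c$ sits in such a thin annulus near radius $1$, your construction cannot support the estimate.

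The second gap concerns (P2). Preserving $|A\Delta B^n|$ does not preserve $\alpha(A)=\inf_x|A\Delta(x+B^n)|$; one must show the infimum is still attained at $x=0$. You acknowledge this at the end but offer no mechanism. The paper leaves the shell $\{1-\rho<|x|<1+\rho\}$, with $\rho=2\alpha/|B^n|$, completely untouched (the thin annuli above are placed strictly inside $(1-\rho)B^n$ and strictly outside $(1+\rho)B^n$). Lemma~\ref{lem:rigid} then gives (P2): for $|y|\le\rho$, the ball $y+B^n$ lies inside $(1+\rho)B^n$ (or contains $(1-\rho)B^n$), regions where $\tilde A$ agrees with $A$; for $|y|>\rho$, Lemma~\ref{lem:ball-translate} gives $|B^n\Delta(y+B^n)|\ge\rho|B^n|>2\alpha$, so the reverse triangle inequality forces $|\tilde A\Delta(y+B^n)|>\alpha$. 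Your construction modifies $A$ all the way up to $\partial B^n$, so this mechanism is unavailable as written.
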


In the proof of Proposition~\ref{prop:reduce},
we assume that $A$ is scaled and centered,
and move parts of its mass towards the origin  
to achieve (P1) and (P3). To ensure (P2),
we leave a narrow neighborhood of the unit 
circle unchanged. In the last step, 
a small translation yields (P4).

The first two lemmas will be used to establish Property (P1). 
As in Section~\ref{sec:outline}, we expand 
\begin{equation}
\label{eq:VVWW}
\delta(A)-\delta(\tilde A) =\V(A)-\V(\tilde A)
+ \W(A)-\W(\tilde A)\,,
\end{equation}
and separately estimate the contributions of $\V$ and $\W$.

\begin{lem} [Moving mass inwards, first variation]
\label{lem:reduce-V}
Let $\frac12 \le R_1<R_2 \le \frac32$, 
and let $A, \tilde A\subset\RR^n$ be scaled subsets with 
$ A\setminus \tilde A\subset (\RR^n\setminus R_2B^n)$
and $\tilde A \setminus A \subset R_1B^n$.
Then $$\V(A)-\V(\tilde A) \gtrsim (R_2-R_1)\, |\tilde A\Delta A|\,.
$$
\end{lem}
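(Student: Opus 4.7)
My plan is to unpack the definition of $\V$ from Eq.~\eqref{eq:def-V} to write
$$
\V(A) - \V(\tilde A) = 2\int_{\tilde A \setminus A}\Phi_\lambda(x)\, dx \;-\; 2\int_{A \setminus \tilde A}\Phi_\lambda(x)\, dx,
$$
and then exploit the geometric separation of the two symmetric differences directly. The two integration regions live in disjoint radial shells (one inside $R_1 B^n$, the other outside $R_2 B^n$), and $\Phi_\lambda$ is radially decreasing, so replacing it with its value on the boundary sphere of each shell should yield an immediate pointwise bound.

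Specifically, on $\tilde A \setminus A \subset R_1 B^n$ strict radial monotonicity of $\Phi_\lambda$ gives the pointwise estimate $\Phi_\lambda(x) \ge \Phi_\lambda(R_1)$, where I write $\Phi_\lambda(r)$ for the common value of $\Phi_\lambda$ on the sphere of radius $r$. On $A \setminus \tilde A \subset \RR^n \setminus R_2 B^n$ one gets $\Phi_\lambda(x) \le \Phi_\lambda(R_2)$. The scaling hypothesis $|A| = |\tilde A| = |B^n|$ then forces $|\tilde A \setminus A| = |A \setminus \tilde A| = \tfrac12 |A \Delta \tilde A|$, so combining these yields
$$
\V(A) - \V(\tilde A) \;\ge\; \bigl(\Phi_\lambda(R_1) - \Phi_\lambda(R_2)\bigr)\, |A \Delta \tilde A|.
$$

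It then remains to show $\Phi_\lambda(R_1) - \Phi_\lambda(R_2) \gtrsim R_2 - R_1$ uniformly for $\tfrac12 \le R_1 < R_2 \le \tfrac32$. I will argue this by differentiating $\Phi_\lambda = \phi_\lambda * \Chi_{B^n}$ under the integral sign (legitimate since $\lambda>1$) to see that $\Phi_\lambda$ is $C^1$ on $\RR^n$, with a continuous, strictly negative radial derivative on $(0,\infty)$. On the compact interval $[\tfrac12,\tfrac32]$ this derivative attains a strictly negative maximum $-c_0$, hence $|\Phi_\lambda'(r)| \ge c_0 = c_0(n,\lambda) > 0$ there, and the fundamental theorem of calculus concludes the estimate.

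Among these steps, the only ingredient that demands more than bookkeeping is the quantitative derivative bound near the unit sphere; even this is routine for $\lambda > 1$, and the rest of the proof is a direct consequence of the layer-cake separation built into the hypotheses and the equal-volume constraint on $A$ and $\tilde A$.
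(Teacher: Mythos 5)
Your proof is correct and follows essentially the same route as the paper's: bound $\Phi_\lambda$ pointwise by its values on the spheres of radius $R_1$ and $R_2$ over the two disjoint pieces of the symmetric difference, use the equal-volume hypothesis to identify $|\tilde A\setminus A|=|A\setminus\tilde A|=\tfrac12|A\Delta\tilde A|$, and then convert $\Phi_\lambda(R_1)-\Phi_\lambda(R_2)$ into a multiple of $R_2-R_1$ via a positive lower bound on $|\nabla\Phi_\lambda|$ over the annulus $\tfrac12\le|x|\le\tfrac32$. (Incidentally, the paper's own intermediate display has $\Phi_\lambda\big\vert_{|x|=R_1}$ and $\Phi_\lambda\big\vert_{|x|=R_2}$ swapped, which as written would give a nonpositive quantity; your version places them correctly.)
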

\begin{proof} 
By Eq.~\eqref{eq:def-V},
$$
\V(A)-\V(\tilde A)  = 2\int\bigl(\Chi_{\tilde A\setminus A}(x)
-\Chi_{A\setminus \tilde A}(x)\bigr)\Phi_\lambda\, dx\,.
$$
Since $\Phi_\lambda$ is smooth and radially 
decreasing, it follows that
\begin{align*}
\V(A)-\V(\tilde A)
&\ge 2 \Bigl(\Phi_\lambda \Big\vert_{|x|=R_2}\Bigr)
|\tilde A\setminus A|
- 2 \Bigl(\Phi_\lambda \Big\vert_{|x|=R_1}\Bigr)|A\setminus \tilde A|\\
&\ge \Bigl(\inf_{\frac12\le |x|\le \frac32}
|\nabla\Phi_\lambda|\Bigr)
(R_2-R_1)\, |\tilde A\Delta A|\,.
\end{align*}
In the last line, we have used that
$\frac12\le R_1<R_2\le \frac32$ and
$|\tilde A\setminus A|= |A\setminus \tilde A|=\frac12|\tilde A\Delta A|$.
\end{proof}

\begin{lem} [Moving mass inwards, second variation]
\label{lem:reduce-W} If $A, \tilde A\subset\RR^n$ 
satisfy $|A\Delta B^n|\le \alpha$, $|\tilde A\Delta B^n|\le \alpha$,
then 
$$|\W(A)-\W(\tilde A)|\lesssim \alpha^{\frac{\lambda}{n}}|\tilde A\Delta A|
\,.$$
\end{lem}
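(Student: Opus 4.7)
The plan is to expand $\W(A)-\W(\tilde A)$ using bilinearity of the functional, reducing the difference to integrals of the form $\iint \Chi_E(x)\Chi_F(y)\phi_\lambda(x-y)\,dxdy$ where $E$ and $F$ are small sets, and then use a pointwise bound on the Riesz potential of a small set.

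Writing $f=\Chi_{B^n}-\Chi_A$ and $\tilde f=\Chi_{B^n}-\Chi_{\tilde A}$, the difference is
$$\W(A)-\W(\tilde A) = \iint\bigl[(f(x)-\tilde f(x))f(y)+\tilde f(x)(f(y)-\tilde f(y))\bigr]\phi_\lambda(x-y)\,dxdy\,.$$
Since $f-\tilde f=\Chi_{\tilde A}-\Chi_A$, we have $|f-\tilde f|\le \Chi_{A\Delta\tilde A}$, while $|f|\le \Chi_{A\Delta B^n}$ and $|\tilde f|\le \Chi_{\tilde A\Delta B^n}$. Therefore
$$|\W(A)-\W(\tilde A)|\le \iint \Chi_{A\Delta\tilde A}(x)\Chi_{A\Delta B^n}(y)\phi_\lambda(x-y)\,dxdy + \iint \Chi_{\tilde A\Delta B^n}(x)\Chi_{A\Delta\tilde A}(y)\phi_\lambda(x-y)\,dxdy\,,$$
with both sets $A\Delta B^n$ and $\tilde A\Delta B^n$ of measure at most $\alpha$ by hypothesis.

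The key estimate is the pointwise bound: for any measurable $F\subset \RR^n$ with $|F|\le \alpha$,
$$\sup_{x\in\RR^n}(\phi_\lambda*\Chi_F)(x)\ \lesssim\ \alpha^{\lambda/n}\,.$$
This follows from the Riesz rearrangement inequality applied to a single set, which gives $(\phi_\lambda *\Chi_F)(x)\le (\phi_\lambda *\Chi_{F^*})(0)$ for every $x$, where $F^*$ is the centered ball of the same volume. An explicit computation in polar coordinates on the ball of radius $R=(|F|/|B^n|)^{1/n}$ yields
$$(\phi_\lambda*\Chi_{F^*})(0)=\frac{1}{c_\lambda}\int_0^R|S^{n-1}|\,r^{\lambda-1}\,dr=\frac{|S^{n-1}|}{\lambda c_\lambda}\left(\frac{|F|}{|B^n|}\right)^{\lambda/n}\,,$$
which is $\lesssim \alpha^{\lambda/n}$ for $|F|\le\alpha$. (This requires only $\lambda>0$, so it applies in the full range of the paper.)

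Combining, each of the two double integrals above is at most $\|\phi_\lambda*\Chi_{F}\|_\infty\cdot|A\Delta\tilde A|\lesssim\alpha^{\lambda/n}|A\Delta\tilde A|$, with the roles of the variables swapped in the second one (using $\phi_\lambda(x-y)=\phi_\lambda(y-x)$). This yields the claimed bound. There is essentially no serious obstacle; the only ingredient beyond algebra is the rearrangement-based $L^\infty$ bound on $\phi_\lambda *\Chi_F$, which is standard and depends only on $|F|$ and $\lambda$.
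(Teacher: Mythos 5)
Your proof is correct and follows essentially the same approach as the paper: expand $\W(A)-\W(\tilde A)$ by bilinearity into integrals of a factor supported on $A\Delta\tilde A$ against a factor supported on a set of measure $O(\alpha)$, then control the inner integral by the bathtub/rearrangement bound $\sup_x(\phi_\lambda*\Chi_F)(x)\lesssim |F|^{\lambda/n}$. The only cosmetic difference is the algebraic identity used to split the difference — the paper uses $f^2-\tilde f^2=(f-\tilde f)(f+\tilde f)$ (exploiting the symmetry of the kernel, yielding a single term with the second factor supported on $(A\Delta B^n)\cup(\tilde A\Delta B^n)$ of measure $\le 2\alpha$), whereas you use the telescoping $fg-\tilde f\tilde g=(f-\tilde f)g+\tilde f(g-\tilde g)$, yielding two terms; both lead to the same estimate with the same constants up to a factor of $2$.
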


\begin{proof}
By Eq.~\eqref{eq:def-W},
\begin{align*}
\W(A)-\W(\tilde A)
&= \iint \bigl(\Chi_{\tilde A}(x)\!-\!\Chi_A(x)\bigr)
\bigl(\Chi_{\tilde A}(y)\!+\! \Chi_A(y)\!-\!2\Chi_B(y)\bigr)
\phi_\lambda(x,y)\, dxdy\,.
\end{align*}
The first factor in the integral is  supported  on $\tilde A\Delta A$,
where it takes the values $\pm 1$. The second
factor is supported on a set of measure at most $2\alpha$, where it
takes values in $\{0, \pm 1,\pm 2\}$. By the Riesz--Sobolev inequality,
\begin{align*}
\W(A)-\W(\tilde A)
&\le
\iint 
\bigl|\Chi_{\tilde A}(x)-\Chi_A(x)\bigr|\,
\big| \Chi_{\tilde A}(y)+ \Chi_A(y)-2\Chi_{B^n}(y)\bigr|
\phi_\lambda(x,y)\, dxdy\\
&\le \int_{\{|x|^n|B^n|< |\tilde A\Delta A|\}}
\int_{\{|y|^n|B^n|<2\alpha\}}
 2\phi_\lambda(x-y) \, dxdy\\
& \le 2 (2\alpha)^{\frac{\lambda}{n}} \Phi_\lambda(0)\,
|\tilde A\Delta A|\,.
\end{align*}
In the last line, we have rescaled the inner integral
to range over the unit ball, and then used that $\Phi_\lambda$
is radially decreasing.
\end{proof}

\bigskip
The next two lemmas will be used to establish Property (P2). 
They show that mass can be moved 
around without changing the asymmetry,
so long as a suitable neighborhood of the
unit circle is left untouched.

\begin{lem}[Symmetric difference of balls]
\label{lem:ball-translate}
For any $y\in \RR^n$, we have 
$$|B^n\Delta (y+B^n)|\ge \min\{|y|,2\}\,|B^n|\,.
$$
\end{lem}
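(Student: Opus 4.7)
The plan is to reduce to the one-parameter family of translates along a single axis, and then establish the inequality by showing the intersection volume is a convex function of the translation distance. By rotational invariance, I may assume $y=de_1$ with $d=|y|\ge 0$. The case $d\ge 2$ is trivial: the balls $B^n$ and $de_1+B^n$ have disjoint interiors, so $|B^n\Delta(de_1+B^n)| = 2|B^n| = \min\{d,2\}|B^n|$.

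For $d\in[0,2]$, set $V(d) := |B^n\cap(de_1+B^n)|$, so that $|B^n\Delta(de_1+B^n)| = 2(|B^n|-V(d))$. The desired bound is then equivalent to
$$
V(d) \ \le\ \bigl(1-\tfrac{d}{2}\bigr)|B^n|\,.
$$
I would prove this by slicing $B^n\cap(de_1+B^n)$ with hyperplanes orthogonal to $e_1$. For each $t\in[d-1,1]$, the two slices are $(n-1)$-balls concentric about the $e_1$-axis, with radii $\sqrt{1-t^2}$ and $\sqrt{1-(t-d)^2}$; thus the cross-section of the intersection is simply the smaller of the two balls. Using the symmetry of the integrand about $t=d/2$ yields the explicit formula
$$
V(d) \ =\ 2|B^{n-1}|\int_{d/2}^{1}(1-t^2)^{\frac{n-1}{2}}\,dt\,.
$$

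Convexity of $V$ on $[0,2]$ now suffices, since together with $V(0)=|B^n|$ and $V(2)=0$ it forces $V(d)\le(1-d/2)|B^n|$. A direct computation gives
$$
V''(d) \ =\ \tfrac{n-1}{4}\,d\,|B^{n-1}|\bigl(1-\tfrac{d^2}{4}\bigr)^{\frac{n-3}{2}} \ \ge\ 0
$$
on $(0,2)$, with $V''\equiv 0$ when $n=1$ (in which case $V$ is affine and equality holds throughout). The only step requiring care is the geometric observation that in the overlap range the two $(n-1)$-dimensional cross-sectional balls share a common center, so that their intersection is cleanly the smaller ball rather than a lens-shaped region. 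Granted this, the remaining steps are routine calculus, and I do not foresee any substantive obstacle.
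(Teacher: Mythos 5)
Your proof is correct and essentially the same as the paper's: both reduce by rotational invariance to a translation along one axis, both identify the intersection volume $V(d)$ with twice the volume of the cap $\{x\in B^n: x_1\ge d/2\}$ via the reflection symmetry about the midpoint hyperplane, and both conclude from convexity of $V$ on $[0,2]$ together with $V(0)=|B^n|$, $V(2)=0$. The only cosmetic difference is that the paper argues $V'$ is increasing because it is a negative multiple of the (decreasing) cross-sectional area at $x_1=d/2$, whereas you write out the one-dimensional integral formula and compute $V''$ explicitly.
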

\begin{proof} We may take
$y = (t, 0, \dots, 0)$ with $t\ge 0$. For $t\ge 2$, the balls
are disjoint and their symmetric
difference equals $2|B^n|$. For $t\in [0,2]$, let
$$
f(t):= |B^n\cap (te_1+B^n)| = 2\,\left|\left\{x\in B^n\ 
\vert\   x_1\ge \tfrac  t 2\right\}\right|\,.  
$$
Clearly, $f(0)=|B^n|$ and $f(2)=0$.
The derivative $f'(t)$
is given by a negative multiple of the cross-sectional 
area of $B^n$ at $x_1=\frac t 2$.
Since $f'(t)$ is increasing on $[0,2]$, $f$ is convex.
Therefore
$$
f(t) \le \left(1-\tfrac{t}{2}\right) f(0) + \tfrac t 2 f(2)
= \left(1-\tfrac{t}{2}\right) |B^n|\,.
$$
Since $|B^n\Delta (x+B^n)|= 2(|B^n|-f(|x|))$, this proves the claim.
\end{proof}

%As a result of this lemma, we obtain the following
%rigidity statement for the asymmetry.

\begin{lem}[Preserving asymmetry while moving mass]
 \label{lem:rigid}
Let $\rho\in [0,1)$, and let $A\subset \RR^n$ be a 
scaled and centered subset
with asymmetry $\alpha(A)\le \frac{\rho}2|B^n|$.
If 
$$ \tilde A\cap (1+\rho)B^n = A\cap (1+\rho)B^n \,,
$$
then $\tilde A$ is centered and $\alpha(\tilde A)=\alpha(A)$.
The same conclusion holds if, instead,
$$
\tilde A\setminus (1-\rho)B^n = A\setminus (1-\rho)B^n\,.
$$
\end{lem}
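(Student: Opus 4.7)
The plan is to verify directly that, under either hypothesis, $\inf_{x\in\RR^n}|\tilde A\Delta(x+B^n)|$ is attained at $x=0$ with value $|A\Delta B^n|=\alpha(A)$. Throughout I work with the (tacit) assumption $|\tilde A|=|A|=|B^n|$, which is in force whenever the lemma is used in Proposition~\ref{prop:reduce}. Under that assumption $\tilde A^*=B^n$, so the centering claim becomes exactly that the infimum is attained at the origin, and $\alpha(\tilde A)$ reduces to $|\tilde A\Delta B^n|$.

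First consider the hypothesis $\tilde A\cap(1+\rho)B^n=A\cap(1+\rho)B^n$. For $|x|\le\rho$ one has $x+B^n\subset(1+\rho)B^n$, which allows the symmetric difference to split cleanly:
\begin{align*}
|\tilde A\Delta(x+B^n)|
&= |(\tilde A\cap(1+\rho)B^n)\Delta(x+B^n)| + |\tilde A\setminus(1+\rho)B^n|,
\end{align*}
and identically for $A$ in place of $\tilde A$. The first term on the right depends only on $\tilde A\cap(1+\rho)B^n=A\cap(1+\rho)B^n$, so it is common to $A$ and $\tilde A$. The second term is also common, because $|A|=|\tilde A|$ combined with equality on $(1+\rho)B^n$ forces $|\tilde A\setminus(1+\rho)B^n|=|A\setminus(1+\rho)B^n|$. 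Hence $|\tilde A\Delta(x+B^n)|=|A\Delta(x+B^n)|$ for every $|x|\le\rho$; taking $x=0$ yields $|\tilde A\Delta B^n|=|A\Delta B^n|=\alpha(A)$.

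For $|x|>\rho$ I apply the triangle inequality for the symmetric-difference pseudometric together with Lemma~\ref{lem:ball-translate}:
\begin{align*}
|\tilde A\Delta(x+B^n)|
&\ge |B^n\Delta(x+B^n)|-|B^n\Delta\tilde A|
\ \ge\ \min(|x|,2)\,|B^n|-\alpha(A) \\
&\ \ge\ \rho\,|B^n|-\alpha(A) \ \ge\ \alpha(A),
\end{align*}
where the last step uses the hypothesis $\alpha(A)\le\tfrac{\rho}{2}|B^n|$. Since $|\tilde A\Delta B^n|=\alpha(A)$, this bound is at least $|\tilde A\Delta B^n|$, so the infimum over translates is attained at $x=0$. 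This delivers simultaneously the centering of $\tilde A$ and the identity $\alpha(\tilde A)=\alpha(A)$.

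For the second hypothesis $\tilde A\setminus(1-\rho)B^n=A\setminus(1-\rho)B^n$, the same plan runs with the decomposition reversed: for $|x|\le\rho$ the inclusion $(1-\rho)B^n\subset x+B^n$ gives
\begin{align*}
|\tilde A\Delta(x+B^n)|
&= |(1-\rho)B^n\setminus\tilde A| + |(\tilde A\setminus(1-\rho)B^n)\Delta((x+B^n)\setminus(1-\rho)B^n)|,
\end{align*}
and the same for $A$; the second piece matches by hypothesis, and the first matches because $|A|=|\tilde A|$ together with the exterior agreement forces $|\tilde A\cap(1-\rho)B^n|=|A\cap(1-\rho)B^n|$. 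The large-$|x|$ bound is identical. The main technical point is the geometric bookkeeping of the $|x|\le\rho$ decomposition, particularly in the second case where one must carry the equal-volume hypothesis through to the interior piece; once the identity $|\tilde A\Delta(x+B^n)|=|A\Delta(x+B^n)|$ is obtained on the small-$|x|$ range, the centering and asymmetry equalities fall out immediately from the triangle inequality.
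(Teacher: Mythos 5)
Your proof is correct and follows essentially the same strategy as the paper: for $|x|\le\rho$ you show $|\tilde A\Delta(x+B^n)|=|A\Delta(x+B^n)|$ by exploiting the containment of $x+B^n$ in $(1+\rho)B^n$ (or of $(1-\rho)B^n$ in $x+B^n$), and for $|x|>\rho$ both you and the paper use the reverse triangle inequality together with Lemma~\ref{lem:ball-translate}. The only cosmetic difference is that the paper obtains the small-$|x|$ equality via the identity $|S\Delta T|=2|T\setminus S|$ for equal-volume sets, while you decompose the symmetric difference set-theoretically; you also rightly flag the tacit hypothesis $|\tilde A|=|B^n|$, which the paper's argument uses implicitly.
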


\begin{proof} We need to show that 
$|\tilde A \Delta (y+B^n)|\ge |\tilde A \Delta B^n|=\alpha(A)$
for all $y\in\RR^n$.
If $\tilde A$ agrees with $A$ on $(1+\rho)B^n$, then 
$$
|\tilde A \Delta(y+B^n)|
= 2 |(y+B^n)\!\setminus \!\tilde A|=2 |(y+B^n)\!\setminus \!A|
\ge \alpha(A)\,, \qquad (|y|\le \rho)\,,
$$
since $y+B^n\subset (1+\rho)B^n$.
Similarly, if $\tilde A$ agrees with
$A$ on the complement of
$(1-\rho)B^n$, then
$$
|\tilde A \Delta(y+B^n)| = 2 |\tilde A \!\setminus \!(y+B^n)|=
2 |A\!\setminus \!(y+B^n)| \ge \alpha(A)\,, \qquad (|y|\le \rho)\,,
$$
since $y+B^n\supset (1-\rho)B^n$.
In either case, for $y=0$ we have
$|\tilde A\Delta B^n|= \alpha(A)$.
Moreover, by the 
reverse triangle inequality and Lemma~\ref{lem:ball-translate},
$$
|\tilde A \Delta (y+B^n)|
\ge |B^n\Delta (y+B^n)| - |\tilde A \Delta B^n|
 \ge \rho|B^n| - \alpha(A)\\
 > \alpha(A)\,,\qquad (|y|> \rho)\,,
$$
completing the proof.
\end{proof}

The next lemma will be used to establish Property (P4).

\begin{lem} [Median]
\label{lem:median} For $n\ge 2$, let $A\subset\RR^n$ be 
a bounded set of positive measure.
There is a unique point $x_0\in\RR^n$ such that
$$
\int_{x_0+A} \frac{y}{|y|}\, dy = 0\,.
$$
If $A$ is scaled and centered, with asymmetry $\alpha(A)=\alpha$,
then $|x_0| \lesssim \alpha(A)$.
\end{lem}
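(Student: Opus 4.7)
The plan is to recast the defining condition as the Euler--Lagrange equation of a smooth, strictly convex minimization problem, and then use quantitative convexity to pin down the minimizer.

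\smallskip
\textbf{Variational setup and unique existence.} Introduce
$$ G(x) = \int_A |x+z|\, dz = \int_{x+A}|y|\, dy\,,$$
so that $\nabla G(x) = \int_{x+A} y/|y|\, dy$, and the equation defining $x_0$ becomes $\nabla G(x_0)=0$. Convexity of $y\mapsto |y|$ makes $G$ convex, and boundedness of $A$ makes $G$ coercive. A direct computation gives
$$ \mathrm{Hess}\, G(x) = \int_A \frac{1}{|x+z|}\Bigl(I-\frac{(x+z)(x+z)^\top}{|x+z|^2}\Bigr)\, dz\,,$$
which is strictly positive definite for $n\ge 2$: the rank-one deficit direction $(x+z)/|x+z|$ varies with $z$, so no nonzero vector is annihilated except on a measure-zero subset of $A$. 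Strict convexity plus coercivity gives a unique minimizer $x_0$, which is then the unique zero of $\nabla G$.

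\smallskip
\textbf{Gradient and Hessian estimates at the origin.} When $A$ is scaled and centered, compare with $A=B^n$: since $\int_{B^n} z/|z|\, dz=0$ by symmetry,
$$ |\nabla G(0)| = \Bigl|\int_{A\setminus B^n}\tfrac{z}{|z|}\, dz-\int_{B^n\setminus A}\tfrac{z}{|z|}\, dz\Bigr|\ \le\ |A\Delta B^n|\ =\ \alpha(A)\,.$$
A spherical calculation (using $\int_{B^n}|z|^{-1}\, dz = n|B^n|/(n-1)$ together with $\int_{B^n} zz^\top/|z|^3\, dz = (|B^n|/(n-1))\, I$) shows that $\mathrm{Hess}\, G(0) = |B^n|\, I$ when $A=B^n$. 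For general scaled $A$ the perturbation of the Hessian at $0$ is controlled by $\int_{A\Delta B^n}|z|^{-1}\, dz \lesssim \alpha^{(n-1)/n}$ (worst case when $B^n\setminus A$ is a small ball at the origin). Hence there exists a threshold $\alpha^*_n$ depending only on $n$ such that, for $\alpha\le\alpha^*_n$, $\mathrm{Hess}\, G(x)\ge (|B^n|/2)\, I$ holds uniformly on a fixed ball around $0$.

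\smallskip
\textbf{Closing the argument; main obstacle.} In the regime $\alpha\le\alpha^*_n$, strong convexity on the segment $[0,x_0]$ combined with $G(x_0)\le G(0)$ yields
$$ \tfrac{|B^n|}{4}|x_0|^2\ \le\ -\langle \nabla G(0),x_0\rangle\ \le\ \alpha\,|x_0|\,, $$
so that $|x_0|\lesssim\alpha$. The main obstacle is the matching a priori bound ensuring that $x_0$ actually lies in the fixed ball where the Hessian estimate is valid. Here one uses $\nabla G(x_0)=0$ together with $|A\cap B^n|\ge |B^n|-\alpha/2$: if $|x_0|$ were large, the unit vectors $(x_0+z)/|x_0+z|$ would cluster near the fixed direction $x_0/|x_0|$ for $z\in A\cap B^n$, contributing a term of norm $\gtrsim |B^n|-\alpha/2$, while the contribution from $A\setminus B^n$ has norm at most $\alpha/2$. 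Balancing these two competing contributions forces $|x_0|\le 2$ whenever $\alpha\le |B^n|/2$. In the complementary regime $\alpha\ge \alpha^*_n$, this crude bound gives $|x_0|\le 2 \le (2/\alpha^*_n)\,\alpha\lesssim \alpha$ trivially, and the small-$\alpha$ analysis finishes the proof.
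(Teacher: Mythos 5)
Your proposal uses the same variational setup as the paper: the function $G(x)=\int_{x+A}|y|\,dy$, the characterization $\nabla G(x_0)=0$, strict convexity for uniqueness, and strong convexity to localize $x_0$. The two routes diverge at the quantitative step. The paper compares $f(x)-f(0)$ to the explicit ball function $g(x)-g(0)$ via the pointwise triangle inequality, bounds $D^2 g$ from below only on a small ball $\{|x|\le 1/n\}$, and then uses \emph{global} convexity of $f$ to show $f(x)\ge f(0)$ for all $|x|\ge 1/n$, so that the minimizer is automatically trapped in the small ball---no separate a priori localization is needed. You instead perturb $\mathrm{Hess}\,G$ about $\mathrm{Hess}\,g$ and supply a separate a priori bound $|x_0|\le 2$ from a direct gradient estimate at large $|x_0|$; that gradient estimate is sound and is a nice alternative.

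There is, however, a genuine gap in how the two halves of your argument fit together. You assert $\mathrm{Hess}\,G(x)\ge(|B^n|/2)\,I$ ``uniformly on a fixed ball around $0$'', but never say which ball, and then you localize $x_0$ only to $\{|x|\le 2\}$. If the ``fixed ball'' is small (of the order $1/n$ as in the paper), your a priori bound does not place $x_0$ inside it. If instead the ball is $\{|x|\le 2\}$, the constant $|B^n|/2$ is wrong: already for $x=2e_1$ the radial component satisfies
$e_1^\top\mathrm{Hess}\,g(2e_1)e_1=\int_{2e_1+B^n}|y|^{-1}\bigl(1-y_1^2/|y|^2\bigr)\,dy<\tfrac12|B^n|$,
because $1-y_1^2/|y|^2\le\tfrac12$ and $|y|^{-1}<1$ there; the radial eigenvalue degenerates as $|x|$ grows, since for $z\in B^n$ the unit vectors $(x+z)/|x+z|$ concentrate near $x/|x|$. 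What is true, and suffices, is that $D^2 g(x)\ge c_n I$ on $\{|x|\le 2\}$ for some $c_n>0$ depending only on $n$ (by continuity and positive definiteness on a compact set), combined with the perturbation bound $\mathrm{Hess}\,G(x)\ge D^2 g(x)-\bigl\|\int_{x+(B^n\setminus A)}|y|^{-1}P_{y^\perp}\,dy\bigr\|\,I\ge D^2g(x)-C\alpha^{(n-1)/n}\,I$, which holds \emph{uniformly} in $x$ by rearrangement---you state the perturbation only at $x=0$, but the uniform version is what the strong-convexity step on $[0,x_0]$ actually requires. With those corrections your argument closes with a smaller constant. (As a side remark, your computation $\mathrm{Hess}\,G(0)=|B^n|I$ for $A=B^n$ is correct.)
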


\begin{proof}  In dimension $n\ge 2$, the function 
$$
f(x)=\int_{x+A} |y|\, dy = \int_A |y-x|\, dy\,,\quad (x\in\RR^n)\,.
$$ 
is continuously differentiable and strictly
convex.  Since $f$ grows at infinity,
it has a unique minimizer, $x_0$, which is characterized
by the variational equation
$$
0\ = \ \nabla f(x_0)\ =\ \int_{x_0+A} \frac{y}{|y|}\, dy\,.
$$

Suppose that $A$ is scaled and centered,
with asymmetry $\alpha(A)=\alpha$, where $\alpha>0$ is small.
Let $f$ be the function defined above,
and let $g$ be the corresponding function for 
the unit ball. By the triangle inequality,
\begin{align}
\notag
f(x)-f(0)  &= \int_{A} (|y-x|-|y|)\, dy\\ 
\label{eq:median-proof}
&\ge
\int_{B^n} (|y-x|-|y|)\, dy - \int_{A\Delta B^n}
\bigl||y-x|-|y|\bigr|\, dy\\
\notag
&\ge g(x)-g(0)-\alpha|x|\,.
\end{align}
In dimension $n\ge 2$,
the function $g$ is twice continuously 
differentiable, strictly radially increasing,
and strictly convex.  We find its Hessian
by differentiating under the integral,
\begin{equation}
\label{eq:Hessian}
D^2 g(x) = \int_{B^n}
\frac{1}{|y-x|}P_{(y-x)^\perp}\, dy
=\int_{x+B^n} \frac{1}{|y|}P_{(y)^\perp}\, dy\,.
\end{equation}
Here $P_{y^\perp}$ denotes the matrix
of the orthogonal projection
onto to hyperplane normal to $y$ (for $y\ne 0$).
The integral converges
and defines a positive definite matrix that depends 
continuously on~$x$.  In particular,
$$
\sum_{j=1}^n \partial_i^2 g(0)= (n-1)
\int_{B^n} \frac{1}{|y|} \, dy = |B^n|\,.
$$
By radial symmetry, $D^2 g(0) = \frac1n |B^n|I$.
For $|x|\le \frac1n$, we use that
$x+B^n\supset \frac{n-1}{n}B^n$
Eq.~\eqref{eq:Hessian} to conclude 
$$
D^2 g(x) \ \ge\  \int_{\frac{n-1}{n}B^n}
\frac{1}{|y|}P_{(y)^\perp}\, dy\ \ge \ 
\left(\frac{n-1}{n}\right)^{n-1} \!\!D^2 g(0)
\ \ge \ \frac{|B^n|}{ne} \,I\,,\qquad (|x|\le \tfrac1n)
$$
as quadratic forms, and thus $g(x) -g(0) \ge \frac{|B^n|}{2 n e } \, |x|^2$.
By Eq.~\eqref{eq:median-proof}, this implies
$$
f(x)-f(0)\ \ge \Bigl( \tfrac{|B^n|}{2 n e}\, |x|-\alpha \Bigr)\,|x|\,,
\qquad(|x|\le \tfrac1n)\,.
$$
If $\alpha\le \frac{|B^n|}{2 n^2 e}$, then $f(x)\ge f(0)$ for
$|x|=\frac1n$, and by convexity
for all $|x|\ge \frac1n$. 
In that case, the minimal value of $f$ lies below
$f(0)$, and hence $|x_0|< \frac{2ne}{|B^n|}\alpha$. 
\end{proof}

\begin{proof}[Proof of Proposition~\ref{prop:reduce}]
Given a set $A$ of asymmetry $\alpha(A)=\alpha>0$
and deficit $\delta(A)=\delta$.
We may assume that $A$ is scaled and centered. 
and that $\alpha$ is small.  
The set $\tilde A$ will be constructed in three steps.
First, the portion of $A$ that lies
in the complement of a ball $(1+R)B^n$ is moved
into a narrow annulus $(1+r)B^n\!\setminus \!(1+\rho)B^n$,
to create a set $A'$.  Then the portion of $A'$
in the annulus $(1-\rho)B^n\!\setminus \! (1-r)B^n$
is moved into the ball $(1-R)B^n$ to create $A''$.
Here, $R=C\alpha^{\frac{\lambda}{n}}$, where $C$ will
be determined below. Once $C$ has been chosen, we take
$\alpha$ small enough that $R\le\frac12$, and
set $\rho=2\alpha/|B^n|$. 
By construction, $(1-R)B^n\subset A\subset (1+R)B^n$,
as required by (P3).
Finally, we perform a translation that re-centers $A''$ 
to $\tilde A$ to obtain Property (P4).

{\em Step 1.}\ 
Define 
$$
A'= \bigl(A\cap (1+R)B^n\bigr) \cup 
\bigl((1+r)B^n\setminus (1+\rho)B^n\bigr)\,,
$$
where $r\ge \rho $ is uniquely determined
by the condition that $|A'|=|A|$. 
By construction, 
$$
A'\cap (1+\rho B^n)= A\cap (1+\rho B^n)\,, \quad
A'\subset (1+R)B^n\,.
$$
In particular, $|A'\Delta A|\le \tfrac12 \alpha$.
Since $\rho\lesssim \alpha$  and
$$
|B^n| = |A'| \ge |B^n| -\tfrac12\alpha
+ \bigl((1+r)^n-(1+\rho)^n\bigr)|B^n|\,,
$$
it follows that $r\lesssim \alpha$.
Consider the  expansion for $\delta(A)-\delta(A')$
from Eq.~\eqref{eq:VVWW}.
By Lemmas~\ref{lem:reduce-V} and~\ref{lem:reduce-W},
\begin{equation}
\label{eq:reduce-VW}
\begin{split}
&\V(A)-\V(A')\gtrsim (R-r)|A'\Delta A|\gtrsim C\alpha^{\frac{\lambda}{n}}
|A'\Delta A|\,,\\
&|\W(A)-\W(A')|\lesssim \alpha^{\frac{\lambda}{n}}|A'\Delta A|\,.
\end{split}
\end{equation}
By choosing $C$ is sufficiently large, we ensure
that $\V(A)-\V(A')\ge |\W(A)-\W(A')|$, and consequently 
$\delta(A')\le \delta$.
Since the implied constants in Eq.~\eqref{eq:reduce-VW}
depend only on $n$ and $\lambda$,
the same is true for $C$.  Moreover, by Lemma~\ref{lem:rigid}, 
$A'$ is centered and $\alpha(A')=\alpha$.

{\em Step 2.}\ 
Define
$$
A'':= (A'\cup (1-R)B^n) 
\setminus\bigl((1-\rho)B^n\setminus (1-r)B^n\bigr)\,,
$$
where $r\ge \rho$ is uniquely determined by
requiring that $|A''|=|A'|$.  
By construction, 
$$
A''\setminus (1-\rho) B^n= A'\setminus (1-\rho) B^n\,, \quad
(1-R)B^n \subset A''\subset (1+R)B^n\,.
$$
In particular, $|A'' \Delta A'|\le \tfrac12 \alpha$.
Since $\rho \lesssim \alpha$  and
$$
|B^n| = |A''| \le \left(1-(1-\rho)^n + (1-r)^n \right)|B^n|+ 
\tfrac12 \alpha\,,
$$
it follows that $r\lesssim \alpha$.
As in Step 1, Lemma~\ref{lem:reduce-V} and~\ref{lem:reduce-W}
imply that $\delta(A'')\le \delta(A')\le\delta$
for $C$ sufficiently large.
Moreover, by Lemma~\ref{lem:rigid}, 
$A''$ is centered and $\alpha(\tilde A)=\alpha$.
Setting 
\begin{equation}
\label{eq:eps}
\eps=-\log(1\!-\!R)\lesssim\alpha^{\frac{\lambda}{n}}\,,
\end{equation}
we see that $A''$ satisfies (P1)-(P3).

{\em Step 3.}\ By Lemma~\ref{lem:median}, there exists
$x_0\in\RR^n$ such that
$$
\int_{x_0+A''} \frac{x}{|x|}\, dx = 0\,.
$$
Setting $\tilde A=x_0+A''$ yields Property (P3).
Since $|x_0|\lesssim \alpha$, 
Property (P3) remains in force
after replacing $R$ with $R+|x_0|$
and adjusting $\eps$ according to Eq.~\eqref{eq:eps}.
\end{proof}

%We are ready for the proof of the main result.

%%%%%%%%%%%%%%%%%%%%%%%%%%%%%%%%%%%%%%%%%%%%%%%%%%%%%%%%%%%%%%%%%%%%
\section{Proof of Theorem~\ref{thm:main}}
\label{sec:main}

Given a subset $A\subset \RR^n$ of asymmetry $\alpha(A)=\alpha$
and deficit $\delta(A)=\delta$, we need
to show that $\delta\gtrsim\alpha^2$. 
We may assume that $\alpha$ is small,
and that $A$ is scaled to have volume $|A|=|B^n|$.
By Proposition~\ref{prop:reduce}, we may further assume
that $A$ is squeezed between two balls as in 
Eq.~\eqref{eq:ring}, and that $\int_A \frac{x}{|x|}\, dx=0$.

Define the functions $M_+$ and $M_-$ by Eq.~\eqref{eq:def-M}.
Since $|A|=|B^n|$,
$$
\int_{S^{n-1}} M_+(\xi)\, d\xi = |A\setminus B^n| 
= \frac12 |A\Delta B^n| \ge \frac{\alpha}{2}\,,
$$
and correspondingly for $M_-$.
This verifies the first line of the constraints in
Eq.~\eqref{eq:constraints}.
For the second line, we compute in polar coordinates
\begin{align*}
\int_{S^{n-1}} \xi \bigl(M_+(\xi)-M_-(\xi)\bigr)\, d\xi 
&= \int_{S^n-1}\int_0^\infty 
\xi\bigl(\chi_A(r\xi)- \chi_{B^n}(r\xi)
\bigr)\, r^{n-1}\, drd\xi\\
&=\int_A \frac{x}{|x|}\, dx=0\,.
\end{align*}

As described in Section~\ref{sec:outline}, we split
the deficit into the first and second variation, 
$\delta=\V-\W$, and then compare these with
the corresponding spherical integrals $V$ and $W$.
By Proposition~\ref{prop:V},
$$\V \geq V(M_+,M_-) + O(\eps) \bigl(\|M_+\|^2 + \|M_-\|^2\bigr)\,,$$
and by Proposition~\ref{prop:W},
$$ \W \leq W(M_+\!-\!M_-) + o(1) \bigl(\|M_+\|^2 + \|M_-\|^2\bigr)
$$
as $\eps\to 0$. Since $\eps\lesssim \alpha^{\frac{\lambda}{n}}$,
the $o(1)$ error term converges to zero uniformly
as $\alpha\to 0$.
By the analysis of the toy model in Eq.~\eqref{eq:toy},
$$
V(M_+,M_-)-W(M_+\!-\!M_-)\ge (\beta_1-\beta_2)\bigl(\|M_+\|^2+\|M_-\|^2\bigr)\,,
$$
where $\beta_1>\beta_2>0$ are determined by $n$ and $\lambda$
through Eq.~\eqref{eq:FH}..
It follows that
$$
\delta 
\ge \bigl(\beta_1-\beta_2-o(1)\bigr)\bigl(\|M_+\|^2 + \|M_-\|^2\bigr)
$$
as $\alpha\to 0$.  Finally, by Schwarz' inequality and 
Eq.~\eqref{eq:constraints},
$ \|M_+\|^2+\|M_-\|^2 %\ge \frac{1}{2|S^{n-1}|} |B^n \Delta \tilde{A}|^2
\ge \frac{\alpha^2}{2|S^{n-1}|}$.
\hfill$\Box$

\newpage

%%%%%%%%%%%%%%%%%%%%%%%%%%%%%%%%%%%%%%%%%%%%%%%%%%%%%%%%%%%%%%%%%%%%
\bibliographystyle{amsplain}
\bibliography{Coulomb}

\providecommand{\bysame}{\leavevmode\hbox to3em{\hrulefill}\thinspace}
\providecommand{\MR}{\relax\ifhmode\unskip\space\fi MR }
% \MRhref is called by the amsart/book/proc definition of \MR.
\providecommand{\MRhref}[2]{%
  \href{http://www.ams.org/mathscinet-getitem?mr=#1}{#2}
}
\providecommand{\href}[2]{#2}
\begin{thebibliography}{10}

\bibitem{AAR-1999}
George~E. Andrews, Richard Askey, and Ranjan Roy, \emph{Special functions},
  Encyclopedia of Mathematics and its Applications, vol.~71, Cambridge
  University Press, Cambridge, 1999.

\bibitem{betsakos-2004}
Dimitrios Betsakos, \emph{Symmetrization, symmetric stable processes, and
  {Riesz} capacities}, Transactions of the American Mathematical Society
  \textbf{356} (2004), no.~2, 735--755.

\bibitem{BC-2015}
Almut Burchard and Gregory~R. Chambers, \emph{Geometric stability of the
  {C}oulomb energy}, Calc. Var. Partial Differential Equations \textbf{54}
  (2015), no.~3, 3241--3250.

\bibitem{BG-2004}
Almut Burchard and Yan Guo, \emph{Compactness via symmetrization}, Journal of
  Functional Analysis \textbf{214} (2004), no.~1, 40--73.

\bibitem{CCELM-2009}
EA~Carlen, MC~Carvalho, R~Esposito, JL~Lebowitz, and R~Marra, \emph{Droplet
  minimizers for the {Gates--Lebowitz--Penrose} free energy functional},
  Nonlinearity \textbf{22} (2009), no.~12, 2919.

\bibitem{C-2017}
Michael Christ, \emph{A sharpened {Riesz-Sobolev} inequality}, Preprint
  arXiv:1706.02007 (2017).

\bibitem{EMOT-1954}
A.~Erd\'{e}lyi, W.~Magnus, F.~Oberhettinger, and F.~G. Tricomi, \emph{Tables of
  integral transforms. {V}ol. {II}}, McGraw-Hill Book Company, Inc., New
  York-Toronto-London, 1954, Based, in part, on notes left by Harry Bateman.

\bibitem{F-1986}
K.~J. Falconer, \emph{The geometry of fractal sets}, Cambridge Tracts in
  Mathematics, vol.~85, Cambridge University Press, Cambridge, 1986.

\bibitem{FJ-2017}
Alessio Figalli and David Jerison, \emph{Quantitative stability for the
  {Brunn--Minkowski} inequality}, Advances in Mathematics \textbf{314} (2017),
  1--47.

\bibitem{FMM-2016}
Alessio Figalli, Francesco Maggi, and Connor Mooney, \emph{The sharp
  quantitative {Euclidean} concentration inequality}, Preprint arXiv:1601.04100
  (2016).

\bibitem{FL2019}
Rupert~L. Frank and Elliott~H. Lieb, \emph{A note on a theorem of {M. Christ}},
  Preprint arXiv:1909.04598 (2019).

\bibitem{FL20192}
\bysame, \emph{Proof of spherical flocking based on quantitative rearrangement
  inequalities}, Preprint arXiv:1909.04595 (2019).

\bibitem{F-1989}
Bent Fuglede, \emph{Stability in the isoperimetric problem for convex or nearly
  spherical domains in {${\bf R}^n$}}, Trans. Amer. Math. Soc. \textbf{314}
  (1989), no.~2, 619--638.

\bibitem{F}
Nicola Fusco, \emph{The quantitative isoperimetric inequality and related
  topics}, Bulletin of Mathematical Sciences \textbf{5} (2015), 517--607.

\bibitem{FMP}
Nicola Fusco, Francesco Maggi, and Aldo Pratelli, \emph{The sharp quantitative
  isoperimetric inequality}, Annals of Mathematics \textbf{168} (2008),
  941--980.

\bibitem{fmp-2009}
\bysame, \emph{Stability estimates for certain {Faber-Krahn}, isocapacitary and
  {Cheeger} inequalities}, Annali della Scuola Normale Superiore di Pisa ---
  Classe di Scienze \textbf{8} (2009), no.~1, 51--71.

\bibitem{Landkof-book}
Naum~S. Landkof, \emph{Foundations of {M}odern {P}otential {T}heory},
  Springer-Verlag, Berlin, 1972.

\bibitem{L-1976}
Elliott~H. Lieb, \emph{Existence and uniqueness of the minimizing solution of
  {C}hoquard's nonlinear equation}, Studies in Appl. Math. \textbf{57}
  (1976/77), no.~2, 93--105.

\bibitem{LL-book}
Elliott~H. Lieb and Michael Loss, \emph{Analysis}, Graduate Studies in
  Mathematics, vol.~14, American Mathematical Society, Providence, RI, 1997.

\bibitem{mazya-2003}
Vladimir Maz'ya, \emph{Lectures on isoperimetric and isocapacitary inequalities
  in the theory of {Sobolev} spaces}, Contemporary Mathematics \textbf{338}
  (2003), 307--340.

\bibitem{mendez-2006}
Pedro~J M{\'e}ndez-Hern{\'a}ndez, \emph{An isoperimetric inequality for {Riesz}
  capacities}, The Rocky Mountain Journal of Mathematics (2006), 675--682.

\bibitem{R-1930}
Fr\'{e}d\'{e}ric Riesz, \emph{Sur une inegalit\'e int\'egrale}, J. London Math.
  Soc. \textbf{5} (1930), no.~3, 162--168.

\bibitem{S-1938}
Sergei~L'vovich Sobolev, \emph{On a theorem of functional analysis}, AMS
  Transl. \textbf{34} (1963), 39--68.

\bibitem{Stein-book}
Elias Stein, \emph{Singular integrals and differentiability properties of
  functions}, PMS-30, Pinceton University Press, 1970.

\bibitem{T}
Giorgio Talenti, \emph{Elliptic equations and rearrangements}, Annali della
  Scuola Normale Superiore di Pisa --- Classe di Scienze \textbf{3} (1976),
  no.~4, 697--718.

\bibitem{W-1983}
Toshiro Watanabe, \emph{The isoperimetric inequality for isotropic unimodal
  {L{\'e}vy} processes}, Zeitschrift f{\"u}r Wahrscheinlichkeitstheorie und
  Verwandte Gebiete \textbf{63} (1983), no.~4, 487--499.

\end{thebibliography}

\end{document}